\newcommand{\Pj}{\mathbb{CP}}
\newcommand{\R}{\mathbb{R}}
\newcommand{\Cp}{\mathbb{C}}
\renewcommand{\vec}[1]{\mathbf{#1}}
\renewcommand{\epsilon}{\varepsilon}
\renewcommand{\imath}{\mathrm{i}}
\DeclareMathOperator{\ud}{d}
\renewcommand{\pdv}[2]{\begingroup 
\@tempswafalse\toks@={}\count@=\z@ 
\@for\next:=#2\do 
{\expandafter\check@var\next\@nil
 \advance\count@\der@exp 
 \if@tempswa 
   \toks@=\expandafter{\the\toks@\,}% 
 \else 
   \@tempswatrue 
 \fi 
 \toks@=\expandafter{\the\expandafter\toks@\expandafter\partial\der@var}}% 
\frac{\partial\ifnum\count@=\@ne\else^{\number\count@}\fi#1}{\the\toks@}% 
\endgroup} 
\def\check@var{\@ifstar{\mult@var}{\one@var}} 
\def\mult@var#1#2\@nil{\def\der@var{#2^{#1}}\def\der@exp{#1}} 
\def\one@var#1\@nil{\def\der@var{#1}\chardef\der@exp\@ne} 
\newcommand{\copyrightnote}[2]{{\renewcommand{\thefootnote}{}
 \footnotetext{\small\it
\begin{flushleft}
 \copyright \ #1   #2  
\end{flushleft}}}}
\newcommand{\Name}[1]{\begin{flushleft}
                       \LARGE \bf #1
                       \end{flushleft}\vspace{-3mm}}
\newcommand{\Author}[1]{\begin{flushleft}
                       \it #1 \end{flushleft}}
\newcommand{\Address}[1]{\begin{flushleft}
                       \it #1 \end{flushleft}}
\newcommand{\Date}[1]{\begin{flushleft}
                      \small  \it #1 \end{flushleft}}
\newcommand{\evenhead}{Author \ name}
\newcommand{\oddhead}{Article \ name}
\renewcommand{\@evenhead}{
\hspace*{-3pt}\raisebox{-15pt}[\headheight][0pt]{\vbox{\hbox to \textwidth
{\thepage \hfil \evenhead}\vskip4pt \hrule}}}
\renewcommand{\@oddhead}{
\hspace*{-3pt}\raisebox{-15pt}[\headheight][0pt]{\vbox{\hbox to \textwidth
{\oddhead \hfil \thepage}\vskip4pt\hrule}}}
\renewcommand{\@evenfoot}{}
\renewcommand{\@oddfoot}{}
\long\def\@makecaption#1#2{%
  \vskip\abovecaptionskip
  \sbox\@tempboxa{\small \textbf{#1.}\ \ #2}%
  \ifdim \wd\@tempboxa >\hsize
    {\small \textbf{#1.}\ \ #2}\par
  \else
    \global \@minipagefalse
    \hb@xt@\hsize{\hfil\box\@tempboxa\hfil}%
  \fi
  \vskip\belowcaptionskip}
\newcommand{\JNMPnumberwithin}[3][\arabic]{%
  \@ifundefined{c@#2}{\@nocounterr{#2}}{%
    \@ifundefined{c@#3}{\@nocnterr{#3}}{%
      \@addtoreset{#2}{#3}%
      \@xp\xdef\csname the#2\endcsname{%
        \@xp\@nx\csname the#3\endcsname .\@nx#1{#2}}}}%
}
\renewenvironment{proof}[1][\proofname]{\par
  \normalfont
  \topsep6\p@\@plus6\p@ \trivlist
  \item[\hskip\labelsep\textbf{%
    #1\@addpunct{.}}]\ignorespaces
}{%
  \qed\endtrivlist
}
\newcommand{\resetfootnoterule} {
  \renewcommand\footnoterule{%
  \kern-3\p@
  \hrule\@width.4\columnwidth
  \kern2.6\p@}
}
\renewcommand{\footnoterule}{}
\theoremstyle{plain}
\newtheorem{theorem}{Theorem}[section]
\newtheorem{corollary}[theorem]{Corollary}
\newtheorem{lemma}[theorem]{Lemma}
\theoremstyle{definition}
\newtheorem{definition}[theorem]{Definition}
\theoremstyle{remark}
\newtheorem{remark}[theorem]{Remark}
\theoremstyle{remark}
\newtheorem*{conjecture*}{Conjecture}
\begin{document}

\renewcommand{\evenhead}{ {\LARGE\textcolor{blue!10!black!40!green}{{\sf \
\ \ ]ocnmp[}}}\strut\hfill G Gubbiotti, D McLare, and G R W Quispel}
\renewcommand{\oddhead}{ {\LARGE\textcolor{blue!10!black!40!green}{{\sf
]ocnmp[}}}\ \ \ \ \ Elementary construction of the invariants of 2D Kahan maps}

%%%% Matter for the first page 
\thispagestyle{empty}
\newcommand{\FistPageHead}[3]{
\begin{flushleft}
\raisebox{8mm}[0pt][0pt]
{\footnotesize \sf
\parbox{150mm}{{Open Communications in Nonlinear Mathematical Physics}\ \ \ \ {\LARGE\textcolor{blue!10!black!40!green}{]ocnmp[}}
\quad Special Issue 1, 2024\ \  pp
#2\hfill {\sc #3}}}\vspace{-13mm}
\end{flushleft}}

\FistPageHead{1}{\pageref{firstpage}--\pageref{lastpage}}{ \ \ }

\strut\hfill

\strut\hfill

\copyrightnote{The author(s). Distributed under a Creative Commons Attribution 4.0 International License}

\begin{center}
%{\Large  {\sf This article is part of a Special Issue in Memory of Professor Decio Levi}}
{  {\bf This article is part of an OCNMP Special Issue\\ 
\smallskip
in Memory of Professor Decio Levi}}
\end{center}

\smallskip

\Name{An elementary construction of modified Hamiltonians and modified measures of 2D Kahan maps}

\Author{Giorgio Gubbiotti}

\Address{Dipartimento di Matematica ``Federigo Enriques'', Universit\`a degli
Studi di Milano, Via C. Saldini 50, 20133 Milano, Italy \& INFN Sezione Milano,
Via G. Celoria 16, 20133 Milano, Italy,
\\
email: giorgio.gubbiotti\@unimi.it}

\Author{David I. McLaren}
\Author{G.~R.~W. Quispel}
\Address{Department of Mathematical and
    Physical Sciences, La Trobe University, Bundoora,VIC 3083, Australia,
    \\
    email: d.mclaren\@latrobe.edu.au
\\
email: r.quispel@latrobe.edu.au}

\Date{Received September 9, 2023; Accepted February 22, 2024}

\setcounter{equation}{0}

\begin{abstract}
    \noindent
    We show how to construct in an elementary way the invariant of the
    KHK discretisation of a cubic Hamiltonian system in two dimensions.
    That is, we show that this invariant is expressible as the product
    of the ratios of affine polynomials defining the prolongation of
    the three parallel sides of a hexagon. On the vertices of such a
    hexagon lie the indeterminacy points of the KHK map. This result
    is obtained by analysing the structure of the singular fibres of the
    known invariant. We apply this construction to several examples,
    and we prove that a similar result holds true for a case outside
    the hypotheses of the main theorem, leading us to conjecture that
    further extensions are possible.
\end{abstract}

\label{firstpage}

%%%% The Article text starts here
\section{Introduction}

In recent years a lot of interest has arisen regarding the problem of finding
good discretisations of continuous systems.  By good discretisation, here we
mean a discretisation which preserves the properties of its continuous
counterpart as much as possible.  Within this framework a procedure called
\emph{Kahan-Hirota-Kimura (KHK) discretisation} became popular. This
discretisation method, defined for quadratic ordinary differential equations
(ODEs), was first discovered by Kahan \cite{Kahan1993,KahanLi1997}. It was
rediscovered independently by Hirota and Kimura, who used it to produce
integrable discretisations of the Euler top \cite{HirotaKimura2000} and the
Lagrange top \cite{KimuraHirota2000}. More recently, the KHK method has been
generalised to birational discretisation of ODEs of higher order and/or degree.
These novel methods are called polarisation methods
(see~\cite{McLachlan_etal2023} and references therein). 

The results of Hirota and Kimura attracted the attention of Petrera, Suris,
and collaborators, who extended the work to a significant number of other
integrable quadratic ODEs
\cite{PetreraPfadlerSuris2009,PetreraSuris2010,PetreraPfadlerSuris2011}.
This in turn led to the work of Celledoni, Owren, Quispel, and
collaborators,
\cite{CelledoniMcLachlanOwrenQuispel2013,CelledoniMcLachlanOwrenQuispel2014}
who showed that the KHK method is the restriction of a Runge-Kutta method
to quadratic differential equations. That is, given a quadratic system
\begin{equation}
    \vec{\dot{x}}=\vec{f}\left( \vec{x} \right),
    \quad
    \vec{x}\colon\R\to\R^{N},
    \quad
    \vec{f}\colon\R^{N}\to\R^{N},
    \label{eq:firstord}
\end{equation}
its KHK discretisation is given by the following formula:
\begin{equation}
    \frac{\vec{x'}-\vec{x}}{h} = 
    2\vec{f}\left( \frac{\vec{x'}+\vec{x}}{2} \right)
    -\frac{\vec{f}\left( \vec{x'} \right)+\vec{f}\left( \vec{x}\right)}{2},
    \label{eq:kahan}
\end{equation}
where $\vec{x} = \vec{x}\left( n h \right)$, $\vec{x'}=\vec{x}\left( (n+1)h
\right)$, and $h$ is an infinitesimal parameter, i.e. $h\to0^{+}$.

While the form of eq.\ \eqref{eq:kahan} makes it clear that the KHK method
is the reduction of a Runge-Kutta method (and hence e.g. commutes with
affine coordinate transformations), and it invariant under
$\vec{x}\leftrightarrow\vec{x'}$, $h\leftrightarrow -h$.  It is less clear
that the method is in fact birational, i.e. the map $\vec{x} \rightarrow
\vec{x}'$ and the inverse map $\vec{x}' \rightarrow \vec{x}$ are both
rational functions.  In particular, from eq.\ \eqref{eq:kahan} it is clear
that the inverse map is obtained from the substitution $h\to-h$ in
formula~\eqref{eq:kahan}. These statements becomes more clear in the
equivalent form \cite{CelledoniMcLachlanOwrenQuispel2013}: \begin{equation}
    \vec{x'}= \boldsymbol{\Phi}\left( \vec{x},h \right)= \vec{x} + h \left(
I_{N}-\frac{h}{2} \vec{f'}\left( \vec{x} \right)
\right)^{-1}\vec{f}\left(\vec{x}\right), \label{eq:kahan2} \end{equation}
where $\vec{f'}\left( \vec{x} \right)$ is the Jacobian of the function
$\vec{f}$.  From the above remark, we write the inverse map as
$\boldsymbol{\Phi}^{-1}\left( \vec{x},h \right)=\boldsymbol{\Phi}\left(
\vec{x},-h \right)$.

Some of the properties found in
\cite{PetreraPfadlerSuris2009,PetreraSuris2010,PetreraPfadlerSuris2011}
were later explained in
\cite{CelledoniMcLachlanOwrenQuispel2013,CelledoniMcLachlanOwrenQuispel2014}.
For instance, the following theorem on the existence of invariants in the
case when the system \eqref{eq:firstord} is Hamiltonian. That is, when there
exists a function $H\colon \R^{N}\to\R$, the Hamiltonian, and a constant skew-symmetric
matrix $J\in M_{N,N}\left( \R \right)$ such that:
\begin{equation}
    \vec{\dot{x}}=J\grad H\left( \vec{x} \right).
    \label{eq:firstordham}
\end{equation}
We observe that throughout this paper the capital letter $H$ will
always denote a Hamiltonian function, while the lower-case $h$ will
denote a time step in a discrete-time map.
Then, the statement proven in \cite{CelledoniMcLachlanOwrenQuispel2013}
is the following:

\begin{theorem}[\cite{CelledoniMcLachlanOwrenQuispel2013}]
    Consider a Hamiltonian vector field \eqref{eq:firstordham}
    with cubic Hamiltonian i.e. the function $H$ is a degree 3 polynomial.
    Then its KHK discretisation \eqref{eq:kahan2} is birational
    and admits the following invariant:
    \begin{equation} 
        \widetilde{H}\left( \vec{x},h \right)=
        H\left( \vec{x} \right) + \frac{h}{3}\grad H\left( \vec{x} \right)^{T}
        \left( I_{N}-\frac{h}{2}\vec{f'} \left( \vec{x} \right) \right)^{-1}\vec{f}\left( \vec{x} \right).
        \label{eq:Htilde}
    \end{equation}
    Moreover the KHK discretisation \eqref{eq:kahan} preserves the
    following measure on $\R^{n}$:
    \begin{equation}
        m\left( \vec{x} \right)
        =
        \frac{\ud x_{1} \wedge \dots \ud x_{N}}{\det\left( I_{N}-h\vec{f'}\left( \vec{x} \right)/2 \right)}.
        \label{eq:measure}
    \end{equation}
    \label{thm:invariant}
\end{theorem}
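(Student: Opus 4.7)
My plan is to address the three assertions of the theorem in turn, each drawing on a different structural feature: polynomiality of $\vec{f}$, cubicity of $H$, and skew-symmetry of $J$. \emph{Birationality} is essentially free from \eqref{eq:kahan2}: since $H$ is cubic, $\vec{f} = J\grad H$ is quadratic, its Jacobian $\vec{f'}(\vec{x})$ is affine linear, and $(I_{N}-h\vec{f'}(\vec{x})/2)^{-1}$ has entries rational in $\vec{x}$, so $\boldsymbol{\Phi}(\vec{x},h)$ is a rational map. The $\vec{x}\leftrightarrow\vec{x'}$, $h\to-h$ symmetry of \eqref{eq:kahan} pointed out in the introduction then gives $\boldsymbol{\Phi}^{-1}(\cdot,h) = \boldsymbol{\Phi}(\cdot,-h)$, which is also rational, so the map is birational.

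For the \emph{invariance of $\widetilde{H}$}, I would first use \eqref{eq:kahan2} to rewrite $(I_{N}-h\vec{f'}(\vec{x})/2)^{-1}\vec{f}(\vec{x}) = (\vec{x'}-\vec{x})/h$, recasting \eqref{eq:Htilde} in the compact form $\widetilde{H}(\vec{x},h) = H(\vec{x}) + \tfrac{1}{3}\grad H(\vec{x})^{T}(\vec{x'}-\vec{x})$. The goal is then to verify $\widetilde{H}(\vec{x'},h) - \widetilde{H}(\vec{x},h) = 0$ by combining: (i) Simpson's rule, which is \emph{exact} for the cubic function $t\mapsto H(\vec{x}+t(\vec{x'}-\vec{x}))$ and yields
\begin{equation*}
H(\vec{x'})-H(\vec{x}) = \tfrac{1}{6}\bigl[\grad H(\vec{x})+4\grad H(\vec{x}_{m})+\grad H(\vec{x'})\bigr]^{T}(\vec{x'}-\vec{x}),\qquad \vec{x}_{m} = (\vec{x}+\vec{x'})/2;
\end{equation*}
(ii) the symmetric form \eqref{eq:kahan}, expressing $(\vec{x'}-\vec{x})/h$ as $2\vec{f}(\vec{x}_{m})-\tfrac{1}{2}(\vec{f}(\vec{x})+\vec{f}(\vec{x'}))$; and (iii) the Hamiltonian identities $\grad H(\vec{y})^{T}\vec{f}(\vec{y})=0$ pointwise and $\grad H(\vec{x})^{T}\vec{f}(\vec{x'}) + \grad H(\vec{x'})^{T}\vec{f}(\vec{x}) = 0$, the latter from $J^{T}=-J$. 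The second iterate $\vec{x''}$ appearing in $\widetilde{H}(\vec{x'},h)$ is eliminated via the pair of Kahan relations at $\vec{x'}$, $(I_{N}-h\vec{f'}(\vec{x'})/2)(\vec{x''}-\vec{x'}) = h\vec{f}(\vec{x'}) = (I_{N}+h\vec{f'}(\vec{x'})/2)(\vec{x'}-\vec{x})$, which express $\vec{x''}-\vec{x'}$ as a rational transform of $\vec{x'}-\vec{x}$.

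For \emph{measure preservation}, I would implicitly differentiate the symmetric form \eqref{eq:kahan} with $\vec{x'}$ viewed as a function of $\vec{x}$. Quadraticity of $\vec{f}$ supplies the midpoint identity $\vec{f'}(\vec{x}_{m}) = (\vec{f'}(\vec{x})+\vec{f'}(\vec{x'}))/2$, and substituting collapses the differentiated relation to $\boldsymbol{\Phi}'(\vec{x}) = (I_{N}+h\vec{f'}(\vec{x'})/2)(I_{N}-h\vec{f'}(\vec{x})/2)^{-1}$. Taking determinants, preservation of $m$ reduces to the single identity $\det(I_{N}+h\vec{f'}/2) = \det(I_{N}-h\vec{f'}/2)$. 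This in turn follows from $\vec{f'} = J\grad^{2}H$ with $J$ skew and $\grad^{2}H$ symmetric: transposing inside the determinant sends $J\grad^{2}H$ to $-\grad^{2}H\cdot J$, and the latter shares its characteristic polynomial with $J\grad^{2}H$ since $AB$ and $BA$ always do.

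The main obstacle will be the invariance step. Arranging the cancellation of Simpson's remainder against the Kahan coupling between $\vec{x}$, $\vec{x'}$, and $\vec{x}_{m}$, while correctly trading away $\vec{x''}$ through the forward/backward pair and deploying the Hamiltonian identities in the right order, is the real algebraic heart of the theorem; by contrast, the birationality and measure arguments become essentially structural once the quadraticity of $\vec{f}$ and the skew-symmetry of $J$ are in hand.
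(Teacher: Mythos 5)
First, a point of comparison: the paper does not prove Theorem \ref{thm:invariant} at all — it is quoted from \cite{CelledoniMcLachlanOwrenQuispel2013} and used as a black box — so your proposal has to stand on its own merits. Two of your three parts do. Birationality is correct as stated. The measure computation is also essentially complete and is the standard argument: implicit differentiation of the symmetric form \eqref{eq:kahan}, together with $\vec{f'}\left(\vec{x}_{m}\right)=\tfrac{1}{2}\left(\vec{f'}\left(\vec{x}\right)+\vec{f'}\left(\vec{x'}\right)\right)$ (valid because $\vec{f'}$ is affine), gives $\left(I_{N}-\tfrac{h}{2}\vec{f'}\left(\vec{x}\right)\right)\boldsymbol{\Phi}'\left(\vec{x}\right)=I_{N}+\tfrac{h}{2}\vec{f'}\left(\vec{x'}\right)$ — you have the two factors in the reverse order, a slip that does not affect the determinant — and the identity $\det\left(I_{N}+\tfrac{h}{2}JS\right)=\det\left(I_{N}-\tfrac{h}{2}JS\right)$ for $J$ skew and $S$ symmetric is exactly what is needed.

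The gap is in the invariance step, and it is a concrete missing identity rather than unexecuted bookkeeping. Your items (i)--(iii) do suffice to prove the two-point relation $H\left(\vec{x'}\right)-H\left(\vec{x}\right)=\tfrac{1}{3}\left(\grad H\left(\vec{x}\right)+\grad H\left(\vec{x'}\right)\right)^{T}\left(\vec{x'}-\vec{x}\right)$: the Simpson remainder $\tfrac{1}{3}\bar{g}^{T}\left(\vec{x'}-\vec{x}\right)$ with $\bar{g}=2\grad H\left(\vec{x}_{m}\right)-\tfrac{1}{2}\left(\grad H\left(\vec{x}\right)+\grad H\left(\vec{x'}\right)\right)$ vanishes because $\vec{x'}-\vec{x}=hJ\bar{g}$ and $\bar{g}^{T}J\bar{g}=0$. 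Substituting this into $\widetilde{H}\left(\vec{x'}\right)-\widetilde{H}\left(\vec{x}\right)$ leaves exactly the term $\tfrac{1}{3}\grad H\left(\vec{x'}\right)^{T}\left(\vec{x''}-\vec{x}\right)$, and with $A=\tfrac{h}{2}\vec{f'}\left(\vec{x'}\right)$ your forward/backward pair gives $\vec{x''}-\vec{x}=2h\left(I_{N}-A^{2}\right)^{-1}\vec{f}\left(\vec{x'}\right)$. So the whole theorem reduces to $\grad H\left(\vec{x'}\right)^{T}\left(I_{N}-A^{2}\right)^{-1}J\grad H\left(\vec{x'}\right)=0$, which is \emph{not} a consequence of the pointwise and two-point identities in (iii); it requires the separate observation that $\left(I_{N}-\tfrac{h^{2}}{4}\left(JS\right)^{2}\right)^{-1}J$ is itself skew-symmetric when $S=\nabla^{2}H\left(\vec{x'}\right)$ is symmetric (its transpose is $-J\left(I_{N}-\tfrac{h^{2}}{4}\left(SJ\right)^{2}\right)^{-1}$, and $J\left(SJ\right)^{2}=\left(JS\right)^{2}J$). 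Without this resolvent identity the cancellation you describe cannot be "arranged"; with it, your plan closes, and as a bonus it shows $\widetilde{H}\left(\vec{x},h\right)=\widetilde{H}\left(\vec{x},-h\right)$, which is the cleanest way to organise the whole argument.
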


\begin{remark}
    The invariant \eqref{eq:Htilde} is the ratio of two polynomials.  If
    $N$ is even (resp.\ odd), the degree of the numerator is at most $N+1$
    (resp. $N+2$), and the degree of the denominator is at most $N$ (resp.\
    $N-1$). In the particular case $N=2$, which we will consider in this
    paper, the invariant \eqref{eq:Htilde} has the following shape:
    \begin{equation} \widetilde{H}\left( x,y,h \right)= \frac{C\left( x,y,h
    \right)}{D\left( x,y,h \right)}, \label{eq:Htilde2} \end{equation}
    where $\deg C = 3$ and $\deg D =2$.
    \label{rem:degre}
\end{remark}

While \Cref{thm:invariant} does not requires integrability in this paper we
restrict to the two-dimensional case, because we are interested in
integrable discretisation.  To be more specific, we show that in the
two-dimensional case the integrability of the KHK discretisation of a cubic
Hamiltonian system is completely characterised by its singularities.
Moreover, we show that these singularities lie on the vertices of a
``hexagon'' and the invariant can be written as the product of the ratios
of affine polynomials defining the prolongation of the three parallel sides
of a hexagon. More importantly, these lines are the singular fibres of the
pencil associated with the invariant.  Our result is based on a previous
investigation of the geometry of the two-dimensional integrable KHK
discretisation given in \cite{PSS2019}.  Furthermore, our main result
permits us to write down the KHK invariant knowing only the base points,
plus trivial operations. In this sense with our result we show how to do a
\emph{KHK discretisation for dummies}.

The structure of the paper is as follows: in \textsc{Section
\ref{sec:main}} we give the preliminary definitions we will use throughout
the paper and prove our main result: Theorem \ref{thm:triangles}.
\textsc{Section \ref{sec:ex}} is devoted to examples of the general
construction. We also present an example belonging to a different class of
integrable KHK discretisations presented in
\cite{CelledoniMcLachlanMcLarenOwrenQuispel2017}, but lying outside Theorem
\ref{thm:triangles}. In such a case, we show that a similar result holds,
even though the invariant is not the product of ratios of parallel lines.
In \textsc{Section \ref{sec:conclusions}} we summarise our results and
discuss open questions, motivated both by the general results and the
considered examples. In particular, we discuss that some results that
recently appeared in the literature, see
\cite{Alonso_et_al2022,GG_cremona3}, hint at a possible extension of this
work to higher-dimension, at least in the integrable case.

\section{Main result}
\label{sec:main}

In this section we state the preliminary definitions and then proceed
to state and prove the main result of this paper contained in Theorem
\ref{thm:triangles}

\subsection{Preliminaries}

Consider a pencil of curves in the affine plane $\Cp^{2}$: 
\begin{equation}
    p\left( x,y;e_{0}:e_{1} \right) = e_{0} p_{0}\left( x,y \right) 
    + e_{1} p_{1}\left( x,y \right),
    \quad
    \left[ e_{0}:e_{1} \right]\in\Pj^{1}.
    \label{eq:pencil}
\end{equation}
Then, we recall the following definitions:

\begin{definition}
    Given a pencil of plane curves $p\left( x,y;e_{0}:e_{1} \right)$
    if a point $\left(x_{0},y_{0} \right)\in\Cp^{2}$ is such that
    $p_{0}\left( x_{0},y_{0} \right)=p_{1}\left( x_{0},y_{0} \right)=0$,
    then it is called a \emph{base point} of the pencil \eqref{eq:pencil}.
    \label{def:basepoints}
\end{definition}

\begin{definition}
    Given a pencil of plane curves $p\left( x,y;e_{0}:e_{1} \right)$
    if a point $\left(x_{0},y_{0};e_{0}',e_{1}'  \right)\in\Cp^{2}\times\Pj^{1}$ 
    is such that
    \begin{equation}
        p\left( x_{0},y_{0};e_{0}':e_{1}' \right) = 
        \frac{\partial p}{\partial x}\left( x_{0},y_{0};e_{0}':e_{1}' \right)=
        \frac{\partial p}{\partial y}\left( x_{0},y_{0};e_{0}':e_{1}' \right)=0.
        \label{eq:singdef}
    \end{equation}
    then it is called a \emph{singular point} for the pencil \eqref{eq:pencil}.
    \label{def:singular}
\end{definition}
%\begin{definition}
%    Then:
%    \begin{enumerate}
%        \item a point $\left(x_{0},y_{0} \right)\in\Cp^{2}$ such that
%            $h_{0}\left( x_{0},y_{0} \right)=h_{1}\left( x_{0},y_{0} \right)=0$
%            is called a \emph{base point} of the pencil \eqref{eq:pencil}.
%        \item a point $\left(x_{0},y_{0};e_{0}',e_{1}'  \right)\in\Cp^{2}\times\Pj^{1}$ 
%            such that
%            \begin{equation}
%                p\left( x_{0},y_{0};e_{0}',e_{1}' \right) = 
%                \frac{\partial p}{\partial x}\left( x_{0},y_{0};e_{0}',e_{1}' \right)=
%                \frac{\partial p}{\partial y}\left( x_{0},y_{0}';e_{0},e_{1}' \right)=0.
%                \label{eq:singdef}
%            \end{equation}
%            is called a \emph{singular point} for the pencil \eqref{eq:pencil}.
%    \end{enumerate}
%    \label{def:singular}
%\end{definition}

Intuitively, a base point is a point lying on \emph{each curve} of the
pencil \eqref{eq:pencil}. On the other hand, a singular point lies on the
curve \emph{and} its gradient vanishes. This means that, in general, for
cubic pencils the singular points lie only on specific members of a pencil,
called \emph{singular fibres}. More formally:
\begin{definition}
    Given a pencil of plane curves $p\left( x,y;e_{0}:e_{1} \right)$
    if the curve $p_{s}(x,y):= p\left( x,y;e_{0}':e_{1}' \right)$,
    with $\left[e_{0}':e_{1}'  \right]\in\Pj^{1}$ contains a singular
    point then it is called a \emph{singular fibre} of the pencil
    \eqref{eq:pencil}.  \label{def:singularfibre}
\end{definition}

If the pencil $p$ is a pencil of elliptic curves, on a
singular fibre either the \emph{genus drops to zero} or the polynomial is
\emph{factorisable}.  A general classification of the singular fibres
of elliptic curves is due to Kodaira~\cite{Kodaira1963}. In addition,
all the possible arrangements of singular fibres on an elliptic fibration
have been classified in~\cite{OS1991}.  This classification is reported in
the monograph~\cite{SchuttShioda2019mordell}, where the different elliptic
fibrations are distinguished using the associated Dynkin diagram, of the
$A$, $D$, $E$ series, see~\cite[Proposition 5.15]{SchuttShioda2019mordell}
The application of this theory to discrete integrable systems has been
discussed in the monograph~\cite{Duistermaat2011book}, and more recently
in~\cite{GubShiNahm}.

In the literature on the algebro-geometric structure of integrable
systems the notion of singular fibres has appeared in several cases.
For instance, in \cite{PettigrewRoberts2008} a classification of the
singular fibres of the QRT biquadratics, (see \cite{QRT1988,QRT1989})
was presented. In \cite{CarsteaTakenawa2012} it was noted that for
minimal elliptic curves of degree higher than three the singular fibre
is unique. Finally, in \cite{Carsteaetal2017} the notion of singular
fibre was used to build several de-autonomisations of QRT maps, see
\cite{HietarintaJoshiNijhoff2016}.

Consider now a birational map $\boldsymbol{\Phi}\colon\Cp^{2}\to\Cp^{2}$.
As usual an \emph{invariant} is a scalar function $h=h(x,y)$ constant 
under iteration of the birational map $h(\boldsymbol{\Phi}(x,y))=h(x,y)$.  In
the case of a rational invariant $h=p_{0}/p_{1}$, with $p_{i}\in\Cp[x,y]$,
the associated pencil $p=e_{0} p_{0} + e_{1} p_{1}$ is \emph{covariant}
with respect to the map $\boldsymbol{\Phi}$. So, in general we have
a one-to-one correspondence between covariant pencils of curves and
rational invariants, and we will go from one to the other
throughout the paper.

Birational maps are not always defined on $\Cp^{2}$. Using projective
geometry it is possible to give a meaning to the cases when a denominator
goes to zero, but there are still undetermined points, defined as follows:

\begin{definition}
    Consider a birational map $\boldsymbol{\Phi}\colon\Cp^{2}\to\Cp^{2}$. 
    A point $\left( x_{0},y_{0} \right)\in\Cp^{2}$ such that all the
    entries of $\boldsymbol{\Phi}$ or its inverse $\boldsymbol{\Phi}^{-1}$
    is of the form $0/0$ is called an \emph{indeterminacy point}.
    \label{def:singular2}
\end{definition}

In the integrable case the set of indeterminacy points of the map
and the set of base points of the associated covariant pencil are
the same, see \cite{CarsteaTakenawa2012,Duistermaat2011book,Tsuda2004}. In
the non-in\-te\-gra\-ble case the analysis of the singularities
proves the non-in\-te\-gra\-bi\-li\-ty of the birational map, see
\cite{DillerFavre2001}.
In particular, for non-integrable systems the analysis of singularities
can prove that the algebraic entropy of the system is positive,
(meaning that the system is non-integrable \cite{BellonViallet1999}),
and that no invariant exists \cite{Takenawa2001JPhyA}.

\subsection{Main theorem and its proof}

We state and prove the following result:
\begin{theorem}
    Consider a cubic Hamiltonian $H=H\left( x,y \right)$.
    Then, the invariant \eqref{eq:Htilde2} is representable as the ratio of 
    two products of three affine polynomials:
    \begin{equation}
        \tilde{H}\left( x,y \right) =
        \frac{\ell\left(x,y; \mu_{1},b_{2} \right)\ell\left(x,y; \mu_{2},b_{6} \right)\ell\left(x,y; \mu_{3},b_{4} \right)}{%
        \ell\left(x,y; \mu_{1},b_{5} \right)\ell\left(x,y; \mu_{2},b_{3} \right)\ell\left(x,y; \mu_{3},b_{1} \right)}
        \label{eq:Htildefact}
    \end{equation}
    where:
    \begin{equation}
        \ell \left(x,y; \mu,b \right)= y-\mu x -b.
        \label{eq:linpol}
    \end{equation}
    \label{thm:triangles}
\end{theorem}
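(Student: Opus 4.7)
The plan is to identify two singular fibres of the pencil associated with $\tilde H = C/D$ that each factor into a product of three affine lines, and then exhibit $\tilde H$ as their ratio.

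First, I would work projectively by homogenising to the pencil of cubics $e_0 \bar C + e_1 z\bar D = 0$ in $\Pj^{2}$, where $\bar D$ is the homogenisation of the quadratic $D$. By Bezout, this pencil has nine base points: three at infinity, namely the points $[1:\mu_i:0]$ determined by the three roots of the homogeneous degree-three part of $C$, and six in the affine plane, which are precisely the indeterminacy points of the KHK map, as recalled before Definition~\ref{def:singular2}. The three slopes $\mu_{1},\mu_{2},\mu_{3}$ in the theorem are exactly these asymptotic directions. A key preliminary observation is that any singular fibre of the pencil factoring as three affine lines must contain the three base points at infinity among its components, so its three lines necessarily have slopes $\mu_{1},\mu_{2},\mu_{3}$.

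Next, I would prove that the pencil admits at least two distinct singular fibres of type $I_{3}$ (three affine lines forming a triangle). Using the ansatz
\begin{equation*}
C-\lambda D=\prod_{i=1}^{3}\bigl(y-\mu_i x - b_i(\lambda)\bigr),
\end{equation*}
I would match coefficients in $x$ and $y$ order by order. Combining the Vieta-type relations involving the elementary symmetric polynomials of the $\mu_{i}$ with the explicit expressions $D=1+(h/2)^{2}\det\mathrm{Hess}(H)$ and $C=HD-(h^{2}/6)(H_{x}^{2}H_{yy}-2H_{x}H_{y}H_{xy}+H_{y}^{2}H_{xx})$ obtained from \eqref{eq:Htilde}, the matching should reduce to a finite polynomial condition on $\lambda$ admitting exactly two solutions $\lambda_{\star},\lambda_{\star\star}$. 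The symmetry $h\leftrightarrow-h$ of the KHK construction (sending $\boldsymbol{\Phi}$ to $\boldsymbol{\Phi}^{-1}$) suggests that the two solutions are naturally paired. This is the main obstacle: rigorously establishing existence of two $I_{3}$ fibres and verifying that their three slopes are distinct. For degenerate cases where $C^{(3)}$ has repeated roots the argument would need to be adapted.

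Once the two triangular fibres have been produced, denote their lines by $\ell^{\star}_{i}(x,y)=y-\mu_{i}x-b^{\star}_{i}$ and $\ell^{\star\star}_{i}(x,y)=y-\mu_{i}x-b^{\star\star}_{i}$. The six affine intersections $\ell^{\star}_{i}\cap\ell^{\star\star}_{j}$ with $i\neq j$ account for all six finite base points (the parallel pairs $i=j$ meet only at infinity), and cyclically traversing them produces a hexagon whose three pairs of opposite sides are precisely the parallel pairs of slope $\mu_{i}$. Since $(C-\lambda_{\star}D)/(C-\lambda_{\star\star}D)$ and $\tilde H$ are rational invariants indexing the same pencil, they differ by a M\"obius transformation; choosing $\lambda_{\star},\lambda_{\star\star}$ so that this transformation becomes the identity, and relabelling the intercepts as $b_{1},\dots,b_{6}$ along the hexagon vertices, yields exactly the factorisation \eqref{eq:Htildefact}.
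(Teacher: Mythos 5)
Your overall architecture is the same as the paper's: identify two members of the pencil associated with $\tilde H=C/D$ that each split into three affine lines with slopes given by the asymptotic directions $\mu_{1},\mu_{2},\mu_{3}$, and present the invariant as their ratio (your closing observation that the base points at infinity force those slopes, and that the six affine base points are the pairwise intersections of non-parallel lines from the two triangles, matches \Cref{rem:linesandpb} and \Cref{rem:conics}). However, the step you yourself flag as ``the main obstacle'' is not an obstacle you may defer --- it \emph{is} the theorem. Your ansatz $C-\lambda D=\prod_{i}\bigl(y-\mu_{i}x-b_{i}(\lambda)\bigr)$ is genuinely overdetermined: for fixed $\lambda$ the three intercepts $b_{i}(\lambda)$ are already determined (generically) by matching the three quadratic coefficients, after which the two linear coefficients and the constant term impose \emph{three further polynomial conditions on the single remaining unknown} $\lambda$. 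That these three conditions have a common root at all, let alone exactly two common roots, is highly non-generic and is precisely what must be verified using the specific structure of the KHK invariant. The paper does this in \Cref{lem:technical} by explicit (computer-algebra) computation, after first importing from \cite{PSS2019} the normal form \eqref{eq:CDexpl} of $C$ and $D$ parametrised by $\mu_{i},b_{i}$ (\Cref{lem:khkform}); in that parametrisation the two singular values come out as $[e_{0}:e_{1}]=[\Delta:b_{2}b_{4}b_{6}]$ and $[\Delta:b_{1}b_{3}b_{5}]$. Your appeal to the $h\leftrightarrow-h$ symmetry to ``pair'' the two solutions is a heuristic, not an argument, and your proposal never actually produces the two values of $\lambda$ or the six intercepts. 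As it stands the existence claim is asserted, not proved.

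Two smaller points. First, your suggestion to ``choose $\lambda_{\star},\lambda_{\star\star}$ so that the M\"obius transformation becomes the identity'' is not available: the two values are rigidly determined by the singularity condition, so $\prod\ell^{\star}_{i}/\prod\ell^{\star\star}_{i}=(C-\lambda_{\star}D)/(C-\lambda_{\star\star}D)$ is a fixed, non-identity M\"obius function of $C/D$. This is harmless --- the paper's own proof establishes only that the ratio of the two triangular fibres is an invariant generating the same pencil, via the invertible parameter change \eqref{eq:changevars}, and that is how \eqref{eq:Htildefact} is to be read --- but you should state the conclusion in that form rather than claim literal equality with \eqref{eq:Htilde2}. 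Second, your explicit formulas $D=1+(h/2)^{2}\det\mathrm{Hess}(H)$ and $C=HD-(h^{2}/6)(H_{x}^{2}H_{yy}-2H_{x}H_{y}H_{xy}+H_{y}^{2}H_{xx})$ are correct and would in principle let you run the verification directly from the Hamiltonian rather than through the \cite{PSS2019} parametrisation; carried to completion this would be a legitimate, slightly more self-contained variant of the paper's computation. But until that computation is actually done, the proof is incomplete at its central step.
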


\begin{remark}
    The lines in \eqref{eq:Htildefact} are three pairs of parallel
    lines:
    \begin{subequations}
        \begin{gather}
        \ell\left(x,y; \mu_{1},b_{2} \right)\parallel\ell\left(x,y; \mu_{1},b_{5} \right),
        \\
        \ell\left(x,y; \mu_{2},b_{3} \right)\parallel\ell\left(x,y; \mu_{2},b_{6} \right),
        \\
        \ell\left(x,y; \mu_{3},b_{1} \right)\parallel\ell\left(x,y; \mu_{3},b_{4} \right).
        \end{gather}
        \label{eq:parallel}
    \end{subequations}
    These lines intersect pairwise in the following six points in 
    the finite part of the plane $\Cp^{2}$:
    \begin{equation}
        \begin{aligned}
            B_{1} =
            \left({\frac {{  b_1}-{  b_6}}{\mu_2-\mu_3}},
            {\frac {{  b_1}\,\mu_2-\mu_3\,{  b_6}}{\mu_2-\mu_3}}\right),
            &\quad
            B_{2} =
            \left( {\frac {{  b_1}-{  b_2}}{\mu_1-\mu_3}},
            {\frac {{  b_1}\,\mu_1-\mu_3\,{  b_2}}{\mu_1-\mu_3}}\right),
            \\
            B_{3} =
            \left(-{\frac {{  b_2}-{  b_3}}{\mu_1-\mu_2}},
            -{\frac {{  b_2}\,\mu_2-\mu_1\,{  b_3}}{\mu_1-\mu_2}}\right),
            &\quad
            B_{4} =
            \left(-{\frac {{  b_3}-{  b_4}}{\mu_2-\mu_3}},
            -{\frac {{  b_3}\,\mu_3-\mu_2\,{  b_4}}{\mu_2-\mu_3}}\right),
            \\
            B_{5} =
            \left({\frac {{  b_4}-{  b_5}}{\mu_1-\mu_3}},
            {\frac {{  b_4}\,\mu_1-\mu_3\,{  b_5}}{\mu_1-\mu_3}}\right),
            &\quad
            B_{6} =
            \left(-{\frac {{  b_5}-{  b_6}}{\mu_1-\mu_2}},
            -{\frac {{  b_5}\,\mu_2-\mu_1\,{  b_6}}{\mu_1-\mu_2}}\right).
        \end{aligned}
        \label{eq:bps}
    \end{equation}
    %and the following points at infinity:
    In general, any combinations of three or more points of the
    previous list are not collinear. 
    A set of points with such property is said to be \emph{in general position}.
    %When those points are collinear the corresponding case is particular.
    %We will discuss these particular cases later. 
    \label{rem:linesandpb}
\end{remark}

The proof of theorem \ref{thm:triangles} is based on the following technical lemmas:
\begin{lemma}[\cite{PSS2019}]
    Consider a cubic Hamiltonian $H=H\left( x,y \right)$.
    Then, the invariant \eqref{eq:Htilde2} is represented by the ratio
    of the following polynomials:
    %of the KHK discretisation of a degree 3 Hamiltonian vector field
    %in the variables $\left( x,y \right)$ 
    \begin{subequations}
        \begin{align}
            C\left( x,y,h \right) &=
            (y-\mu_1x) (y-\mu_2 x) (y-\mu_3 x)+c_5 x^2+c_6 x y+c_7 y^2+c_8 x+c_9 y,
            \label{eq:Cexpl}
            \\
            D\left( x,y,h \right) &=d_1 x^2+d_2 x y+d_3 y^2+d_4 x+d_5 y+ b_1 b_3 b_5-b_2 b_4 b_6.
            \label{eq:Dexpl}
        \end{align}
        \label{eq:CDexpl}%
    \end{subequations}
    The explicit form of the coefficients $c_{i}$ and $d_{i}$ is given in 
    equation \eqref{eq:params} in \Cref{app:explicit}.
    \label{lem:khkform}
\end{lemma}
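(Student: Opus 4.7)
The plan is to compute $\tilde H$ directly from formula \eqref{eq:Htilde}, specialised to $N=2$ with cubic Hamiltonian $H$, and to simplify the resulting rational function into the stated shape. Since the construction is covariant under affine changes of coordinates (with $J$ rescaled by the determinant of the linear part, absorbable into the time step $h$), I may first apply an affine change bringing the cubic part of $H$ into the factored form $H_{3}=(y-\mu_{1}x)(y-\mu_{2}x)(y-\mu_{3}x)$: any generic binary cubic admits such a factorisation over $\Cp$.

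With $J=\begin{pmatrix}0&1\\-1&0\end{pmatrix}$, $\vec{f}=(H_{y},-H_{x})^{T}$, and $M=I_{2}-(h/2)J\,\mathrm{Hess}(H)$, the $2\times2$ adjugate is $\mathrm{adj}(M)=I_{2}+(h/2)J\,\mathrm{Hess}(H)$, so a direct expansion using the antisymmetry $\nabla H^{T}\vec{f}=H_{x}H_{y}-H_{y}H_{x}=0$ of the order-$h^{0}$ contribution yields
\begin{equation*}
\det M=1+\tfrac{h^{2}}{4}\bigl(H_{xx}H_{yy}-H_{xy}^{2}\bigr),\qquad \nabla H^{T}\mathrm{adj}(M)\vec{f}=-\tfrac{h}{2}\bigl(H_{x}^{2}H_{yy}-2H_{x}H_{y}H_{xy}+H_{y}^{2}H_{xx}\bigr).
\end{equation*}
Substituting into \eqref{eq:Htilde} and clearing the denominator gives $\tilde H=C_{0}/D$ with $D=\det M$ (of total degree $\leq 2$ in $(x,y)$, since the entries of $\mathrm{Hess}(H)$ are linear) and $C_{0}=H\det M-(h^{2}/6)\bigl(H_{x}^{2}H_{yy}-2H_{x}H_{y}H_{xy}+H_{y}^{2}H_{xx}\bigr)$ of \emph{a priori} total degree $\leq 5$. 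The reduction $\deg C_{0}\leq 3$ follows from Euler's relation $\mathrm{Hess}(H_{k})\binom{x}{y}=(k-1)\nabla H_{k}$ applied to the homogeneous pieces of $H$: the case $k=3$ is the classical binary-cubic identity $3H_{3}\det\mathrm{Hess}(H_{3})=2\nabla H_{3}^{T}\mathrm{adj}(\mathrm{Hess}(H_{3}))\nabla H_{3}$, whose prefactor ratio $3:2$ matches exactly the coefficients $h^{2}/4$ and $h^{2}/6$ in $C_{0}$ and so annihilates the degree-$5$ part; the polarised bilinear versions produced by replacing one copy of $H_{3}$ with $H_{2}$ then annihilate the degree-$4$ part.

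After these cancellations the cubic part of $C_{0}$ is a nonzero scalar multiple $\alpha(h)H_{3}$, and rescaling $C:=C_{0}/\alpha$, $D:=\det M/\alpha$ produces the cubic part $H_{3}$ required in \eqref{eq:Cexpl}; the residual constant term of $C$ can then be absorbed into $D$ by $\tilde H\mapsto\tilde H-\mathrm{const}$, leaving precisely the sub-cubic terms $c_{5}x^{2}+c_{6}xy+c_{7}y^{2}+c_{8}x+c_{9}y$. The denominator $D$ has degree $\leq 2$ and takes the form \eqref{eq:Dexpl}, with all coefficients $c_{i},d_{i}$ read off from the explicit expansion and recorded in \eqref{eq:params}. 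The main obstacle is not the computation itself but the identification of the constant term of $D$ with the combination $b_{1}b_{3}b_{5}-b_{2}b_{4}b_{6}$: this requires interpreting the $b_{i}$ geometrically as intercepts of the three parallel-line singular fibres through the base points \eqref{eq:bps}, which is the content of the subsequent Theorem \ref{thm:triangles}, and at the present level of the proof is verified \emph{a posteriori} by direct symbolic comparison, as in the original reference \cite{PSS2019} from which the lemma is drawn.
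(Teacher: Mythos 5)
First, a point of comparison: the paper does not prove this lemma at all --- it is imported from \cite{PSS2019}, with the coefficient formulas simply recorded in \Cref{app:explicit} --- so your direct-computation route is a legitimate thing to attempt and there is no internal proof to measure it against. The first half of your computation is correct: with $J$ the standard symplectic matrix, $M=I_{2}-(h/2)J\,\mathrm{Hess}(H)$ is traceless up to the identity, so $\operatorname{adj}(M)=I_{2}+(h/2)J\,\mathrm{Hess}(H)$ and $\det M=1+(h^{2}/4)\det\mathrm{Hess}(H)$, and your expressions for $\nabla H^{T}\operatorname{adj}(M)\vec{f}$ and for $C_{0}$ follow. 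The degree-$5$ cancellation via $\nabla F^{T}\operatorname{adj}(\mathrm{Hess}\,F)\nabla F=\tfrac{k}{k-1}F\det\mathrm{Hess}\,F$ (Euler, $k=3$) is right, and the degree-$4$ part does vanish as well --- although your appeal to ``polarisation'' is loose, since $H_{2}$ is not a cubic and the binary-cubic identity cannot literally be polarised against it; the needed mixed identities must be rederived by applying Euler's relation separately to each homogeneous piece, and they do come out right.

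The genuine gap is in your final paragraph. The cubic part of $C_{0}$ is \emph{not} a scalar multiple of $H_{3}$. Take $H=x^{2}y+\tfrac12 y^{2}$: then $\det M=1+\tfrac{h^{2}}{4}(2y-4x^{2})$, $H_{x}^{2}H_{yy}-2H_{x}H_{y}H_{xy}+H_{y}^{2}H_{xx}=-6x^{4}y+2y^{3}$, and
\begin{equation*}
    C_{0}=H\det M-\tfrac{h^{2}}{6}\bigl(-6x^{4}y+2y^{3}\bigr)=x^{2}y-\tfrac{h^{2}}{12}\,y^{3}+\tfrac12 y^{2},
\end{equation*}
whose leading form $x^{2}y-\tfrac{h^{2}}{12}y^{3}$ is an $O(h^{2})$ deformation of $H_{3}=x^{2}y$ with roots $\mu_{1}=0$, $\mu_{2,3}=\pm 2\sqrt{3}/h$. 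So the $\mu_{i}$ of \eqref{eq:Cexpl} are the roots of this $h$-deformed cubic (equivalently, the directions of the three base points at infinity of \Cref{rem:conics}), not of $H_{3}$; your opening normalisation ``factor $H_{3}$ as $\prod(y-\mu_{i}x)$'' attaches the $\mu_{i}$ to the wrong polynomial, and in this example $H_{3}$ itself cannot even be written in that form (its $y^{3}$ coefficient vanishes) while the deformed cubic can. This is repairable --- all you need is that the leading form of $C_{0}$ is a nonzero binary cubic with nonvanishing $y^{3}$ coefficient, generically arrangeable --- but as written the step is false. The second, more serious omission is that the actual content of the lemma is the specific nine-parameter normal form: that the coefficients organise into the $(b_{i},\mu_{i})$ parametrisation \eqref{eq:params} with constant term $b_{1}b_{3}b_{5}-b_{2}b_{4}b_{6}$ in $D$. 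That identification is precisely what Lemma \ref{lem:technical} and Theorem \ref{thm:triangles} consume, and you defer it entirely to ``direct symbolic comparison \dots as in the original reference'', which concedes the substantive claim rather than proving it.
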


\begin{remark}
    The free parameters in \eqref{eq:CDexpl} and \eqref{eq:params} depend
    on the original parameters of the cubic Hamiltonian $H$ using the
    formulas contained in \Cref{app:explicit} of \cite{PSS2019}.  All
    the parameters depend explicitly on the time-step $h$. However, we
    note that to give the proof of \Cref{thm:triangles} we don't need to
    use this explicit expression, but it is sufficient that given the
    polynomials \eqref{eq:CDexpl} it is possible to uniquely determine the
    corresponding KHK discretisation.  This in turn implies that the result
    of Theorem \ref{thm:triangles} holds independently of the KHK structure
    of the underlying continuous system.  
    \label{rem:parameters}
\end{remark}

\begin{remark}
    Since $\deg D =2$ \eqref{eq:Dexpl} it follows that the base points
    of a KHK map lie on a conic section, i.e. on a curve of genus
    zero~\cite{Shafarevich1994}. Moreover, by explicit computation, the
    set $\mathcal{D}=\left\{ D=0 \right\}$ is the common denominator of
    the maps $\boldsymbol{\Phi}$ and $\boldsymbol{\Phi}^{-1}$.  From the
    explicit form of the parameters $d_{i}$ from \Cref{app:explicit}
    it can be proved that the real part of this conic curve is either an
    ellipse or an hyperbola, but not a parabola. In Section \ref{sec:ex}
    we will see examples of base points lying both on (real) ellipses
    and hyperbolas. Finally, we observe that the fact that $\deg D=2$
    implies that when adding the line at infinity $\Pj^{2}=\Cp^{2}\cup
    \Set{t=0}$, there are always \emph{three base points at infinity}
    coming from the solutions of: 
    \begin{equation}
        C^{h}(x:y:0,h) = 0, \quad C^{h}(x:y:t,h) = t^{3} C\left(
        \frac{x}{t},\frac{y}{t},h \right).  \label{eq:homc}
    \end{equation} \label{rem:conics}
\end{remark} 

\begin{lemma}
    The pencil of cubic curves:
    \begin{equation}
        p\left( x,y;e_{0}:e_{1} \right) = e_{0} C\left( x,y,h \right) + e_{1}D\left( x,y,h \right),
        \label{eq:pencilCD}
    \end{equation}
    where the functions $C$ and $D$ are given by equation \eqref{eq:CDexpl},
    admits two singular fibres for the following values of $\left[ e_{0}:e_{1} \right]\in\Pj^{1}$:
    \begin{equation}
        \left[e_0' :e_{1}'\right] 
        =\left[ \Delta:b_2 b_4 b_6\right],
        \quad
        \left[e_0'' :e_{1}''\right] 
        =\left[ \Delta:b_1 b_3 b_5\right],
        \label{eq:e0e1singular}
    \end{equation}
    where $\Delta$ is given by equation \eqref{eq:Deltadef}.
    Moreover, the corresponding singular curves in the pencil 
    \eqref{eq:pencilCD} factorise into affine polynomials as follows:
    \begin{subequations}
        \begin{align}
            p\left(x,y;e_0' :e_{1}'\right)
            &=\Delta%\left(b_2 b_4 b_6-b_1 b_3 b_5\right)
            \ell\left(x,y; \mu_{1},b_{2} \right)
            \ell\left(x,y; \mu_{2},b_{6} \right)\ell\left(x,y; \mu_{3},b_{4} \right),
            \\
            p\left(x,y;e_0'' :e_{1}''\right)
            &=\Delta%\left( b_2 b_4 b_6-b_1 b_3 b_5\right)
            \ell\left(x,y; \mu_{1},b_{5} \right)
            \ell\left(x,y; \mu_{2},b_{3} \right)\ell\left(x,y; \mu_{3},b_{1} \right).
        \end{align}
        \label{eq:pCDsingular}
    \end{subequations}
    \label{lem:technical}
\end{lemma}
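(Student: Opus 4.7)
The plan is to exhibit two reducible cubics in the pencil \eqref{eq:pencilCD} and show that they are the singular fibres. Motivated by the hexagon of six lines from \Cref{rem:linesandpb}, I set
\[
P_1 = \ell(x,y;\mu_1,b_2)\,\ell(x,y;\mu_2,b_6)\,\ell(x,y;\mu_3,b_4),\quad
P_2 = \ell(x,y;\mu_1,b_5)\,\ell(x,y;\mu_2,b_3)\,\ell(x,y;\mu_3,b_1),
\]
and I proceed in three stages: (i) reduce $P_j - C$ to a scalar multiple of $D$; (ii) pin down the corresponding pencil parameter by evaluating at a convenient point; (iii) conclude that both fibres are singular because each factors as a product of three pairwise non-parallel lines.

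For stage (i), I first observe that $\deg D\le 2$ implies the degree-$3$ part of $e_0 C + e_1 D$ equals $e_0(y-\mu_1 x)(y-\mu_2 x)(y-\mu_3 x)$, which is precisely the degree-$3$ part of both $P_1$ and $P_2$; hence $P_j - C$ has degree at most $2$ for $j=1,2$. Next, reading off the coordinates \eqref{eq:bps}, each base point $B_i$ lies on exactly one line of $P_1$ and one line of $P_2$: the six lines split into an \emph{even-intercept} triple defining $P_1$ and an \emph{odd-intercept} triple defining $P_2$, and the alternation of the $B_i$ around the hexagon delivers the required incidences. Since $C$ and $D$ also vanish at $B_1,\dots,B_6$ (they are base points by \Cref{lem:khkform} and \Cref{def:basepoints}), both $P_1-C$ and $P_2-C$ are conics vanishing at the six $B_i$. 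By the no-three-collinear property in \Cref{rem:linesandpb}, the six $B_i$ impose five independent conditions on the space of conics, which forces $P_j - C = \kappa_j D$ for constants $\kappa_1, \kappa_2$.

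For stage (ii), I evaluate at the origin: $C(0,0) = 0$ from \eqref{eq:Cexpl}, $D(0,0) = b_1 b_3 b_5 - b_2 b_4 b_6$ from \eqref{eq:Dexpl}, $P_1(0,0) = -b_2 b_4 b_6$ and $P_2(0,0) = -b_1 b_3 b_5$. Writing $\Delta := b_2 b_4 b_6 - b_1 b_3 b_5$ (consistent with the definition referenced in \eqref{eq:Deltadef}), this yields $\kappa_1 = b_2 b_4 b_6/\Delta$ and $\kappa_2 = b_1 b_3 b_5/\Delta$, so $\Delta P_1 = \Delta C + b_2 b_4 b_6 D = p(x,y;\Delta:b_2 b_4 b_6)$ and $\Delta P_2 = \Delta C + b_1 b_3 b_5 D = p(x,y;\Delta:b_1 b_3 b_5)$, which is precisely \eqref{eq:pCDsingular}--\eqref{eq:e0e1singular}. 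Finally, for stage (iii), each $P_j$ is a product of three linear forms with pairwise distinct slopes $\mu_1, \mu_2, \mu_3$, so its three component lines meet pairwise in three distinct points; at any such point two factors of $P_j$ vanish simultaneously, and therefore so do both first partial derivatives of $P_j$, making it a singular point in the sense of \Cref{def:singular}. The main obstacle I foresee is the conic-uniqueness argument in stage (i): without the general-position property of the $B_i$ the reduction $P_j - C \propto D$ could fail, and one must also assume $\Delta \neq 0$ so that the two singular fibres are genuinely distinct members of the pencil.
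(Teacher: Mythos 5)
Your strategy is genuinely different from the paper's proof, which proceeds by brute force: it posits an undetermined affine factor $L=\alpha x+\beta y+\gamma$, divides the general pencil member by $L$, and solves the system obtained by forcing the remainder to vanish, thereby recovering the admissible slopes $\alpha=-\beta\mu_i$, the two parameter values \eqref{eq:e0e1singular}, and a spurious third value of $\left[e_0:e_1\right]$ that yields no singular fibre. Your route---exhibit the two reducible cubics, observe that $P_j-C$ is a conic through the six points \eqref{eq:bps}, invoke uniqueness of the conic through six points in general position to get $P_j-C=\kappa_j D$, and fix $\kappa_j$ by evaluating at the origin---is conceptually cleaner, and your stage (iii) is a correct and more transparent justification of why the reducible fibres are singular than anything the paper writes down explicitly.

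There is, however, a genuine gap under the load-bearing step of stage (i): the claim that $C$ and $D$ vanish at $B_1,\dots,B_6$. You justify it by saying the $B_i$ are base points ``by \Cref{lem:khkform} and \Cref{def:basepoints}'', but neither establishes this: \Cref{lem:khkform} only gives the shape of $C$ and $D$, \Cref{def:basepoints} only defines the term, and \Cref{rem:linesandpb} introduces the coordinates \eqref{eq:bps} purely as the pairwise intersections of the six lines in the factorisation \eqref{eq:Htildefact}---that is, of your $P_1$ and $P_2$, whose membership in the pencil is precisely what is to be proven. As written the argument is therefore circular: the identification of \eqref{eq:bps} with the base points of the pencil \eqref{eq:pencilCD} is a \emph{consequence} of the lemma, not an available input. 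To close the gap you must check directly, by substituting \eqref{eq:bps} into \eqref{eq:Cexpl}--\eqref{eq:Dexpl} with the coefficients \eqref{eq:params}, that $C(B_i)=D(B_i)=0$ for $i=1,\dots,6$; these twelve identities do hold, but verifying them is a computation of the same computer-algebra flavour as the paper's own proof, so it cannot be waved away. Once that is done the rest goes through: the general-position statement in \Cref{rem:linesandpb} makes $D$ the unique conic (up to scale) through the six points, the hypothesis $\Delta\neq0$ is already implicit in \Cref{lem:khkform} because the coefficients $c_i$ in \eqref{eq:params} carry $\Delta$ in their denominators, and your evaluation at the origin is legitimate precisely because $D(0,0)=-\Delta\neq0$, which is worth stating.
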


\begin{proof}
    The proof is achieved by direct computation using computer
    algebra software, e.g. \texttt{Maple}.
    In principle, we have to solve the system \eqref{eq:singdef}
    where $p$ is given by the pencil \eqref{eq:pencilCD}.
    This approach is quite cumbersome from the computational
    point of view, as it involves the solution of nonlinear algebraic
    equations.
    We propose the following approach, which proved to be easier to
    implement.
    Take a general affine polynomial with unspecified coefficients:
    \begin{equation}
        L = \alpha x + \beta y + \gamma.
        \label{eq:linegen}
    \end{equation}
    Using polynomial long division with respect to $x$ we can write:
    \begin{equation}
        p \left( x,y;e_{0}:e_{1} \right)= Q\left( x,y;e_{0}:e_{1} \right)L\left( x,y \right)+
        R\left( y;e_{0}:e_{1} \right).
        \label{eq:pLdivision}
    \end{equation}
    If we impose $R\equiv0$, then we will have $R\mid p$.
    We can obtain such conditions by setting to zero all the
    coefficients with respect to the various powers of $y$ in $R$.
    For instance the coefficient of $y^{3}$ is:
    \begin{equation}
        \Delta e_0(\beta\mu_3+\alpha)(\beta\mu_2+\alpha)(\beta\mu_1+\alpha)=0.
        \label{eq:Adef}
    \end{equation}
    So, we can choose three different values for $\alpha$.
    This already suggests that there will be three different affine
    factors.
    
    We start by choosing $\alpha=-\beta\mu_{1}$. 
    Substituting it back into $R$ we obtain the following value for
    $\left[ e_{0}:e_{1} \right]$:
    \begin{equation}
        \left[ e_{0}:e_{1} \right]
        =
        \left[ 
            \beta \left( b_{2}-b_{5} \right)\Delta:
            \beta\left( b_1 b_2 b_3 b_5-  b_2 b_4 b_5 b_6\right)- \gamma\Delta
        \right].
        \label{eq:e0e1sol0}
    \end{equation}
    This finally yields the following values for $\gamma$:
    \begin{equation}
        \gamma = -\beta b_2, - \beta b_5 , 
        -\frac{\mu_1 \beta (b_1 b_3-b_4 b_6)}{\left(b_1-b_{4}\right)\mu_{2} +\left(b_3-b_6\right) \mu_3}.
        \label{eq:Csol0}
    \end{equation}
    Substituting~\eqref{eq:Csol0} back into \eqref{eq:e0e1sol0} we obtain
    the two solutions presented in \eqref{eq:e0e1singular},
    plus a third one:
    \begin{equation}
        \left[ e_{0}''':e_{1}''' \right]=
        \left[ 
            \begin{gathered}
            \Delta (b_2-b_5)\left(\left(b_1 -b_{4}\right) \mu_2 +\left(b_3 -b_6\right) \mu_3\right):
            \\
            \left( b_1 b_3-b_4 b_6 \right)
            \left(-\Delta \mu_1+b_2 b_5 (b_1-b_4) \mu_2+b_2 b_5 (b_3-b_6) \mu_3
            \right)
            \end{gathered}
        \right].
        \label{eq:e0e1rd}
    \end{equation}
    While substituting~\eqref{eq:e0e1singular} into the pencil~\eqref{eq:pencilCD}
    we obtain the two singular fibres~\eqref{eq:pCDsingular}, 
    this third one does not give rise to a singular fibre.

    Repeating the same argument with the other possible values of
    $\alpha$ in~\eqref{eq:Adef} we obtain the same result.
    This concludes the proof.
\end{proof}

\begin{proof}[Proof of Theorem \ref{thm:triangles}] 
    Consider the pencil built with the two polynomials
    in \eqref{eq:pCDsingular}:
    \begin{equation}
        P = \varepsilon_{0} p\left(x,y;e_0' :e_{1}'\right)
        +\varepsilon_{1} p\left(x,y;e_0'' :e_{1}''\right),
        \label{eq:Peps01}
    \end{equation}
    where $\left[ \varepsilon_{0}:\varepsilon_{1} \right]\in\Pj^{1}$.
    The following invertible change of parameters:
    \begin{equation}
        \left[ e_{0}:e_{1} \right]=
        \left[ -\left( \varepsilon_{0}+\varepsilon_{1} \right)\Delta:
        b_{1}b_{3}b_{5}\varepsilon_{1}+b_{2}b_{4}b_{6}\varepsilon_{0}\right],
        \label{eq:changevars}
    \end{equation}
    transforms the pencil \eqref{eq:pencilCD} into the pencil 
    \eqref{eq:Peps01}.
    Using the result of Lemma \ref{lem:khkform} we have that the pencil
    \eqref{eq:Peps01} is covariant on the KHK discretisation of a cubic 
    Hamiltonian vector field in the variables $\left( x,y \right)$ 
    \eqref{eq:Htilde2}.
    This in turn implies that the ratio \eqref{eq:Htildefact} is an invariant
    for the KHK discretisation of a cubic Hamiltonian vector field. 
    This concludes the proof of the theorem.
\end{proof}

\begin{remark}
    An alternative proof of Theorem \ref{thm:triangles} can be obtained
    through the theory of Darboux polynomials 
    \cite{CelledoniEvripidouMcLareOwewnQuispelTapleyvanderKamp2019}.
    Indeed, consider the KHK discretisation associated with the most general 
    cubic Hamiltonian in $\left( x,y \right)$:
    %For $h=1$ (the general case will follow scaling this one properly)
    %this KHK discretisation can be written as:
    \begin{subequations}
        \begin{align}
            \frac{x'-x}{h}  & = a_2 x' x+a_3(x'y+x y')+a_4 y'y+a_6(x'+x)+a_7(y'+y)+a_9, 
            \label{eq:h3e1}
            \\
            \frac{y'-y}{h} & = -a_1x'x-a_2(x'y+x y')-a_3y'y-a_5( x'+x)-a_6( y' +y)-a_8.
            \label{eq:h3e2}
        \end{align}
        \label{eq:h3eq}
    \end{subequations}
    The coefficients $a_{i}$ are linked to the coefficients $b_{i}$ and
    $\mu_{i}$ through the results of \cite{PSS2019}, which we report in
    formula \eqref{eq:abmu} presented in \Cref{app:map}.
    Now, consider the two polynomials given in \Cref{eq:pCDsingular}
    evaluated on $\left( x',y' \right)$ from \Cref{eq:h3e1,eq:h3e2}:
    \begin{subequations}
        \begin{align}
            p\left(x',y';e_0' :e_{1}'\right)&=
            -\frac{b_{14}b_{25}b_{36}P_{1}P_{2}P_{3}}{\mu_{12}\mu_{13}\mu_{23}Q^{3}}
            p\left(x,y;e_0' :e_{1}'\right),
            \\
            p\left(x',y';e_0'' :e_{1}''\right)&=
            -\frac{b_{14}b_{25}b_{36}P_{1}P_{2}P_{3}}{\mu_{12}\mu_{13}\mu_{23}Q^{3}}
            p\left(x,y;e_0'' :e_{1}''\right).
        \end{align}
        \label{eq:pprime}%
    \end{subequations}
    where $b_{ij}=b_i-b_j$, $\mu_{ij}=\mu_i - \mu_j$ and the polynomials $P_{i}$
    and $Q$ are given in formula \eqref{eq:PQ} presented in \Cref{app:PQ}.
    This implies that the polynomials \eqref{eq:pCDsingular} are Darboux
    polynomials with the same cofactor.
    From the general theory of Darboux polynomials this implies that their
    ratio is an invariant.
    \label{rem:darboux}
\end{remark}

In Figure \ref{fig:pencil} we show an example of pencil \eqref{eq:pencilCD}
where we highlight the two singular curves $C$ and $D$.

\begin{figure}[htb]
    \centering
    \includegraphics{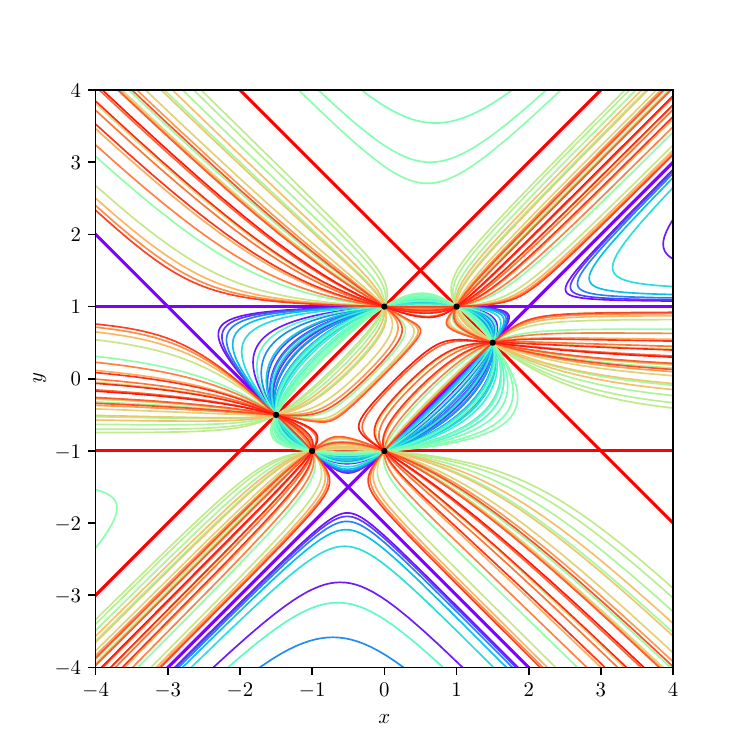}
    \caption{An example of pencil \eqref{eq:pencilCD} with 
    $b_{1}=-b_{4}=2$, $b_{2}=b_{3}=-b_{5}=-b_{6}=1$
    $\mu_{1}=0$,$\mu_{1}=1$, and $\mu_{2}=-1$.
    In red and purple are shown the two singular curves factorised
    in three lines. The base points are highlighted in black.}
    \label{fig:pencil}
\end{figure}

Theorem \ref{thm:triangles} implies a simple algorithm to construct
the invariant of the given KHK discretisation \eqref{eq:kahan2} of a cubic 
Hamiltonian vector field \eqref{eq:firstordham}.
Using the correspondence between base points of a pencil and the indeterminacy
points of the corresponding map we obtain that given such a map the
corresponding indeterminacy points will lie on the vertices of a hexagon. 
Considering the lines obtained prolonging the edges of the hexagon we
form the lines composing the invariant \eqref{eq:Htildefact}.
In the next section we will see several examples of this phenomenon.

Before moving to the example section, we give an interpretation of the 
content of \Cref{thm:triangles} in the context of Oguiso and Shioda's 
classification of 74 types of singular fibre configurations of rational 
elliptic surfaces \cite{OS1991}:

\begin{corollary}
    Consider a cubic Hamiltonian $H=H\left( x,y \right)$ for \emph{generic
    values of the parameters}. Then, the singular fibres configurations of
    the pencil of elliptic curves associated to the invariant
    \eqref{eq:Htilde2} are of type $A_{2}^{2} \oplus A_{1}$.
    \label{cor:singfib}
\end{corollary}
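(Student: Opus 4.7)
My plan is to identify all reducible singular fibres of the pencil \eqref{eq:pencilCD} and match the resulting configuration against Oguiso and Shioda's list. Lemma \ref{lem:technical} supplies two of them directly: each cubic in \eqref{eq:pCDsingular} factors as a product of three affine linear polynomials with pairwise distinct slopes $\mu_{1},\mu_{2},\mu_{3}$, and by Remark \ref{rem:linesandpb} the three lines meet pairwise in three distinct vertices in general position. Hence each projective closure is a triangle of three $\Pj^{1}$'s meeting transversally at three distinct nodes, which is a Kodaira fibre of type $I_{3}$. Its dual graph is the extended Dynkin diagram $\widetilde{A}_{2}$, so each contributes an $A_{2}$ summand to the ADE type.

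Next I would analyse the fibre at $[e_{0}:e_{1}]=[0:1]$, i.e.\ $D=0$. Homogenising as in \eqref{eq:homc} and using $\deg D=2$, the associated projective cubic factors as $t\,\widetilde{D}(x,y,t)$, where $\widetilde{D}$ is a homogeneous conic. By Remark \ref{rem:conics}, for generic parameter values $\widetilde{D}=0$ is a smooth irreducible conic (ellipse or hyperbola) that meets the line at infinity $t=0$ transversally in the two points determined by \eqref{eq:homc}. This is a Kodaira fibre of type $I_{2}$ and contributes an $A_{1}$ summand.

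The remaining step---the only one where genericity is essential---is to rule out any further reducible fibres. Here I would reuse the argument in the proof of Lemma \ref{lem:technical}: the vanishing condition \eqref{eq:Adef} for a linear factor $L=\alpha x+\beta y+\gamma$ to divide a member of the pencil forces either $e_{0}=0$ (which recovers the $D=0$ fibre just handled) or $\alpha=-\beta\mu_{i}$ for some $i\in\{1,2,3\}$, and the analysis in that proof showed that the three finite-slope branches exhaust the triangles \eqref{eq:pCDsingular}. Therefore no other fibre of the pencil admits a linear component, so any remaining singular cubic is irreducible, i.e.\ of Kodaira type $I_{1}$ (nodal) or $II$ (cuspidal), neither of which contributes to the ADE part of the configuration.

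Summing the reducible contributions yields $A_{2}\oplus A_{2}\oplus A_{1}=A_{2}^{2}\oplus A_{1}$, as claimed. As a cross-check, a rational elliptic surface has Euler number $12$, and the identified reducible fibres account for $3+3+2=8$, so the remaining four units must be supplied by four generic nodal $I_{1}$ fibres, consistent with Oguiso and Shioda's entry for $A_{2}^{2}\oplus A_{1}$. The hardest part of the argument is the genericity hypothesis: one has to exclude the Zariski-closed loci in the parameter space where two slopes $\mu_{i}$ coincide, where two vertices $B_{i}$ coalesce, or where the three lines of one of the singular fibres become concurrent, since any such coincidence merges fibres and can promote the ADE type beyond $A_{2}^{2}\oplus A_{1}$.
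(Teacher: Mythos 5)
Your proof is correct and follows essentially the same route as the paper's: both identify the fibre at $[e_{0}:e_{1}]=[0:1]$ as the line at infinity plus a conic meeting it in two points (type $A_{1}$) and the two factorised fibres of \Cref{lem:technical} as triangles of lines in general position (type $A_{2}$ each). You go slightly further by explicitly ruling out any additional reducible fibres via the divisibility condition \eqref{eq:Adef} and by the Euler-number cross-check; the paper leaves that completeness step implicit in the exhaustive case analysis of \Cref{lem:technical}.
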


\begin{proof}
    From \Cref{lem:khkform} and the proof of \Cref{thm:triangles} we know 
    that the pencil of elliptic curves associated to the invariant 
    \eqref{eq:Htilde2} has two different representations, one given by 
    \Cref{eq:pencilCD} and one given by \Cref{eq:Peps01}.

    From \Cref{rem:conics} we have that the fibre $[e_{0}:e_{1}]=[0:1]$ is 
    singular. Compactifying again to $\Pj^{2}$ we have that the zero locus
    $\mathcal{D}$ is reducible:
    \begin{equation}
        \mathcal{D} = \set{tD^{h}(x:y:t,h)=0} = \Set{t=0}\cup\set{D^{h}(x:y:t,h)=0},
        \label{eq:redcon}
    \end{equation}
    where
    \begin{equation}
        D^{h}(x:y:t,h) = t^{2} D\left( \frac{x}{t},\frac{y}{t} \right).
        \label{eq:Dh}
    \end{equation}
    Now, for generic $D$, by the properties of homogenenous polynomials
    in two variables we have that:
    \begin{equation} 
        \abs{\Set{t=0}\cap\set{D^{h}(x:y:t,h)=0}}=2,
        \label{eq:cardtDh}
    \end{equation}
    i.e. the singular fibre associated to $\mathcal{D}$ is of type
    $A_{1}$ (two non-tangential intersections).

    From \Cref{eq:Peps01} we have two singular fibres at 
    $[\varepsilon_{0}:\varepsilon_{1}]=[1:0],[0:1]$. In both cases, we 
    have three lines, which for generic values of the parameters
    intersect in three different points. That is, they form
    two singular fibres of type $A_{2}$.
    This concludes the proof of the corollary.
\end{proof}

\begin{remark}
    The rational elliptic surface with singular fibres configuration of
    type $A_{2}^{2}\oplus A_{1}$ is listed in \cite[Table
    8.2]{SchuttShioda2019mordell} as number 20. Note that, for particular
    values of the parameters, cases whose singular fibre configuration
    \emph{contains} $A_{2}^{2}\oplus A_{1}$ are possible, e.g. number 40 or
    number 61.
    \label{rem:typeandpart}
\end{remark}

%\begin{corollary}
%    Consider the KHK discretisation \eqref{eq:kahan2} of a degree 3 
%    Hamiltonian vector field \eqref{eq:firstordham}.
%    Assume that the indeterminacy points of the map $\boldsymbol{\Phi}\left( x,y,h \right)$
%    given by the six points $B_{i}=\left( x_{i},y_{i} \right)\in\Cp^{2}$,
%    $i=1,2,\dots,6$ are in general position.
%    Then, the indeterminacy points are on the vertices of an hexagon
%    and the invariant of \eqref{eq:kahan2} is given by:
%    \label{cor:basepoints}
%\end{corollary}

\section{Examples}
\label{sec:ex}

In this section we show in some concrete examples how to construct the
invariant from the indeterminacy points of a given map. We will also
show that a similar result holds in the case of KHK discretisation obtained
from quadratic Hamiltonians with an affine gauge function.

\subsection{Henon-Heiles potential}

Consider the so-called Henon-Heiles (HH) potential \cite{HenonHeiles1964}:
\begin{equation}
    H = \frac{y^2+x^2}{2}+yx^2-\frac{y^3}{3}.
    \label{eq:hh}
\end{equation}
The corresponding system of Hamiltonian equations is:
\begin{equation}
    \dot{x}= x^2-y^2+y,
    \quad
    \dot{y} =-2xy-x.
    \label{eq:hheq}
\end{equation}

It is well known that in the continuous case the HH potential is
factorisable in three lines forming a triangle. These three lines govern
the behaviour of the complete HH system $H' = T+H$, where $T=T\left(
p_{x},p_{y} \right)$ is the standard kinetic energy in the conjugate
momenta of $x$ and $y$, $p_{x}$ and $p_{y}$ respectively.  For a complete
discussion on this topic we refer to~\cite{Tabor1989}.

In~\cite{CelledoniMcLachlanOwrenQuispel2013} it was shown that
the continuous triangle was preserved by the KHK discretisation
of~\eqref{eq:hheq}.
Here, following Section~\ref{sec:main}, we will show that there
exist two more sets of lines which give a factorised representation
of the invariant of the discrete systems.
We will comment on how these two invariants are pushed to infinity
in the continuum limit $h\to0$.

The KHK discretisation of equation~\eqref{eq:hheq} is:
\begin{equation}
    \frac{x'-x}{h} = x'x - y'y+\frac{y'+y}{2}, 
    \quad
    \frac{y'-y}{h} = -xy'-x'y-\frac{x+x'}{2}.
    \label{eq:hheqd}
\end{equation}
The indeterminacy points of the associated map
are the following six:
\begin{equation}
    \begin{aligned}
        B_{1} = \left(\frac{\sqrt{3}}{4}+\frac{1}{2h}, \frac{1}{4}-\frac{\sqrt{3}}{2h}  \right),
        B_{2} = \left( \frac{1}{h},-\frac{1}{2} \right),
        B_{3} = \left(-\frac{\sqrt{3}}{4}+\frac{1}{2h}, \frac{1}{4}+\frac{\sqrt{3}}{2h}\right),
        \\
        B_{4} = \left(\frac{\sqrt{3}}{4}-\frac{1}{2h}, \frac{1}{4}+\frac{\sqrt{3}}{2h}  \right),
        B_{5} = \left( -\frac{1}{h},-\frac{1}{2} \right), 
        B_{6} =\left(-\frac{\sqrt{3}}{4}-\frac{1}{2h}, \frac{1}{4}-\frac{\sqrt{3}}{2h}  \right)
    \end{aligned}
    \label{eq:HHbp}
\end{equation}
The indeterminacy points are numbered in clock-wise direction
and lie on the vertices of a regular hexagon.
Following remark \ref{rem:conics} we observe that these base points
lie on the circle:
\begin{equation}
    x^2 + y^2=\frac{1}{4}+\frac{1}{h^{2}},
    \label{eq:hhcicle}
\end{equation}
of radius $r = \sqrt{1/4+1/h^{2}}$.

Following the algorithm presented at the end of Section \ref{sec:main}
we introduce the following set of lines:
\begin{subequations}
    \begin{align}
        \overline{B_{1}B_{2}} &= 
        y-{\frac { \left( 3 h-2 \sqrt {3} \right) x}{h\sqrt {3}-2}}
        + {\frac {\sqrt {3}{h}^{2}-4 \sqrt {3}+4 h}{2h \left( h\sqrt {3}-2 \right) }},
        \label{eq:hhb1b2}
        \\
        \overline{B_{2}B_{3}} &=y+{\frac { \left( 3 h+2 \sqrt {3} \right) x}{h\sqrt {3}+2}}
        + {\frac {\sqrt {3}{h}^{2}-4 \sqrt {3}-4 h}{2h \left( h\sqrt {3}+2 \right) }},
        \label{eq:hhb2b3}
        \\
        \overline{B_{3}B_{4}} &=y-{\frac {h+2 \sqrt {3}}{4h}},
        \label{eq:hhb3b4}
        \\
        \overline{B_{4}B_{5}} &=y-{\frac { \left( 3 h+2 \sqrt {3} \right) x}{h\sqrt {3}+2}}
        +{\frac {\sqrt {3}{h}^{2}-4 \sqrt {3}-4 h}{2h \left( h\sqrt {3}+2 \right) }},
        \label{eq:hhb4b5}
        \\
        \overline{B_{5}B_{6}} &=y+{\frac { \left( 3 h-2 \sqrt {3} \right) x}{h\sqrt {3}-2}}
        +{\frac {\sqrt {3}{h}^{2}-4 \sqrt {3}+4 h}{2h \left( h\sqrt {3}-2 \right) }},
        \label{eq:hhb5b6}
        \\
        \overline{B_{6}B_{1}} &=y +{\frac {h-2 \sqrt {3}}{4h}}.
        \label{eq:hhb6b1}
    \end{align}
    \label{eq:hhlines}
\end{subequations}
We then build the invariant \eqref{eq:Htildefact} as:
\begin{equation}
    \tilde{H} =
    \frac{\overline{B_1B_2}\,\overline{B_3B_4}\,\overline{B_5B_6}}{%
    \overline{B_2B_3}\, \overline{B_4B_5}\, \overline{B_6B_1}}.
    \label{eq:hhHtildefact}
\end{equation}
Taking the continuum limit $h\to0$ we have:
\begin{equation}
    \tilde{H} 
    =-1-\sqrt{3} h-\frac{3}{2}h^2+
    -\left( \frac{4}{\sqrt{3}}H+\frac{19}{36}\sqrt{3}  \right)h^{3} + O\left( h^{4} \right),
    \label{eq:hhHtildeclim}
\end{equation}
so we see that we recover the continuum first integral \eqref{eq:hh}.

Given the hexagon formed by $B_{i}$ we can construct three additional
lines:
\begin{equation}
    \overline{B_{1}B_{4}} = y - 1 +\sqrt{3}x,
    \quad
    \overline{B_{3}B_{6}} = y - 1 -\sqrt{3}x,
    \quad
    \overline{B_{2}B_{5}} = y + \frac{1}{2},
    \label{eq:hhaddline}
\end{equation}
These lines form the original triangle of the continuous potential HH system.
The we can form the polynomial:
\begin{equation}
    P = 
    \overline{B_{1}B_{4}}\,\overline{B_{3}B_{6}}\,\overline{B_{2}B_{5}},
    \label{eq:hhP}
\end{equation}
and prove by direct computation that it is the Darboux polynomial
with the same cofactor as the numerator and denominator
of \eqref{eq:hhHtildefact}, see Remark \ref{rem:darboux}.
This implies that in the potential HH case we can construct the two
additional following invariants:
\begin{equation}
    \tilde{H}_{1} =
    \frac{\overline{B_{1}B_{4}}\,\overline{B_{3}B_{6}}\,\overline{B_{2}B_{5}}}{%
    \overline{B_2B_3}\, \overline{B_4B_5}\, \overline{B_6B_1}}.
    %\frac{\overline{B_1B_2}\,\overline{B_3B_4}\,\overline{B_5B_6}}{%
    %\overline{B_2B_3}\, \overline{B_4B_5}\, \overline{B_6B_1}}.
    \quad
    \tilde{H}_{2} =
    \frac{\overline{B_{1}B_{4}}\,\overline{B_{3}B_{6}}\,\overline{B_{2}B_{5}}}{%
    \overline{B_1B_2}\,\overline{B_3B_4}\,\overline{B_5B_6}}.
    \label{eq:hhHadd}
\end{equation}
Taking the continuum limit $h\to0$ we have:
\begin{equation}
    \tilde{H}_{1} = - \tilde{H}_{2} 
    =-\frac{9\sqrt{3}}{2h^{3}}\left( H-\frac{1}{6} \right) + O\left( \frac{1}{h^{2}} \right),
    \label{eq:hhHaddclim}
\end{equation}
where we used the fact $P=-3H+1/2$.
So we see that also in this case we recover the continuum first 
integral \eqref{eq:hh} through the continuum limit.

A graphical representation of the situation is given in Figure \ref{fig:hh}.
In particular we see that the lines in \ref{eq:hhaddline} are independent
of $h$, so they are preserved by the continuum limit.
On the other hand the lines in \ref{eq:hhlines} as $h\to0$ are pushed to
infinity.
This explains why in the continuous HH system in the finite part of the plane
only the triangle defined by \eqref{eq:hh} is present.
Finally, from a direct computation we see that the singular fibres
configuration of the pencil associated to the invariant 
\eqref{eq:hhHtildefact} is of type $A_{2}^{3}\oplus A_{1}$, and there is
a singular fibres of type $A_{0}$ represented by a nodal cubic,
i.e. it is the elliptic fibration number 61 from \cite[Table 8.2]{SchuttShioda2019mordell}.
That is, the structure is more special than the generic one, described in
\Cref{cor:singfib}, and explains the additional triangle-like structure
observed.

\begin{figure}[hbt]
    \centering
    \includegraphics{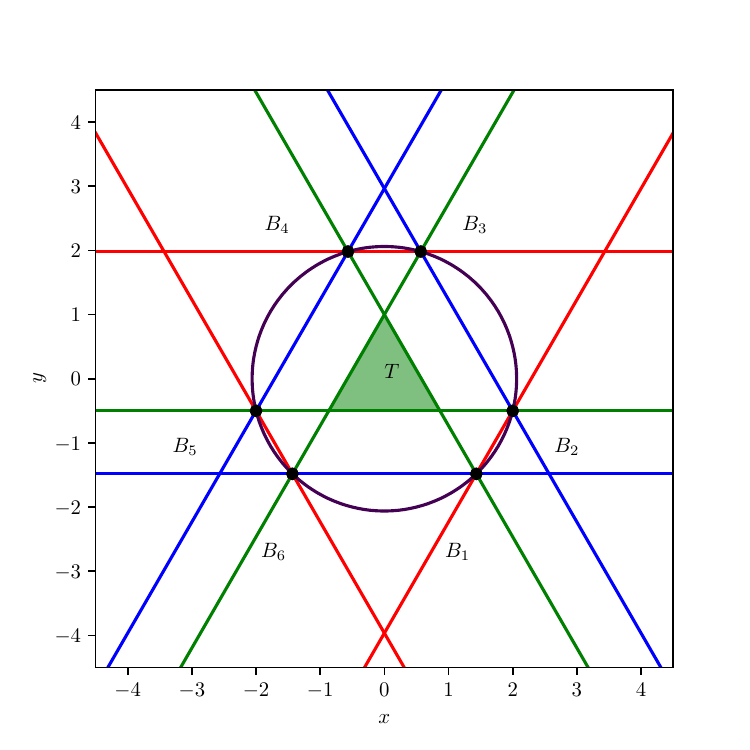}
    \caption{The HH case~\eqref{eq:hheqd} with 
        $h=1/2$: 
        the lines $\overline{B_{1}B_{2}}$, $\overline{B_{3}B_{4}}$, 
        and $\overline{B_{5}B_{6}}$ in red, the lines $\overline{B_{2}B_{3}}$, 
        $\overline{B_{4}B_{5}}$, and $\overline{B_{6}B_{1}}$ in blue, and
        the $h$ independent lines $\overline{B_{1}B_{4}}$, $\overline{B_{2}B_{5}}$, 
    and $\overline{B_{3}B_{6}}$ in green. The HH tringle~\cite{HenonHeiles1964} is visualised in light green, while the circle~\eqref{eq:hhcicle} is drawn in purple.}
    \label{fig:hh}
\end{figure}

\subsection{The most general factorisable example}

In this subsection we consider a generalisation of the HH example.
That is, we consider the most general cubic Hamiltonian $H$ factorisable
in three affine factors.
Up to canonical transformations, this Hamiltonian is:
\begin{equation}
    H = \left( x-x_{0} \right)\left( y-y_{0} \right)\left( A x + B y +C  \right),
    \label{eq:fact}
\end{equation}
where $x_{0}$, $y_{0}$, $A$, $B$, and $C$ are arbitrary constants.
The corresponding system of Hamiltonian equations is:
\begin{equation}
    \dot{x}= (x-x_0) (A x+2 B y+C-B y_0),
    \quad
    \dot{y} =-(y-y_0) (2 A x+B y+C-A x_0).
    \label{eq:facteq}
\end{equation}
The factorisation structure preserves a triangle-like configuration like
in the HH case.
We will discuss how this structure transforms after the KHK discretisation.

The KHK discretisation of equation \eqref{eq:facteq}, constructed with the
rule \eqref{eq:kahan} is
\begin{subequations}
    \begin{align}
        \frac{x'-x}{h} &
        \begin{aligned}[t]
            &= \frac{B}{2} \left[(2 y-y_0) x'-2(y- y_0+ y') x_0-x (y_0-2 y')\right]
            \\
            &-\frac{x_{0}}{2} \left[A (x+ x')+2 C\right]
            +\frac{1}{2} (2 A x+C) x'+\frac{C}{2} x , 
        \end{aligned}
        \label{eq:facteqda}
        \\
        \frac{y'-y}{h} &
        \begin{aligned}[t]
            &=- \frac{A}{2} \left[(2 x-x_0) y'-2(x- x_0+ x') y_0-y (x_0-2 x')\right]
            \\
            &+\frac{y_{0}}{2} \left[B (y+ y')+2 C\right]
            -\frac{1}{2} (2 A x+C) y'-\frac{C}{2} y, 
        \end{aligned}
        \label{eq:facteqdb}
    \end{align}
    \label{eq:facteqd}
\end{subequations}
and possesses the following indeterminacy points:
\begin{equation}
    \begin{gathered}
        B_{1} = \left(x_0,+\frac{y_0}{2} -\frac{1}{B}\left( \frac{Ax_0+C}{2}-\frac{1}{h}\right)  \right),
        \quad
        B_{2} = \left( \frac{x_0}{2}-\frac{1}{A}\left(\frac{B y_0+C}{2}-\frac{1}{h}\right), y_0 \right),
        \\
        B_{3} = \left(\frac{x_0}{2}-\frac{1}{A}\left(\frac{B y_0+C}{2}-\frac{1}{h}\right), 
        \frac{y_0}{2}-\frac{1}{B}\left(\frac{A x_0+C}{2}+\frac{1}{h}\right)\right),
        \\
        B_{4} = \left(x_0, \frac{y_0}{2}-\frac{1}{B}\left(\frac{A x_0+C}{2}+\frac{1}{h}\right)  \right),
        \quad
        B_{5} = \left( \frac{x_0}{2}-\frac{1}{A}\left(\frac{B y_0+C}{2}+\frac{1}{h}\right), y_0 \right), 
        \\
        B_{6} =\left(\frac{x_0}{2}-\frac{1}{A}\left(\frac{B y_0+C}{2}+\frac{1}{h}\right), 
        \frac{y_0}{2}-\frac{1}{B}\left(\frac{A x_0+C}{2}-\frac{1}{h}\right) \right)
    \end{gathered}
    \label{eq:factbp}
\end{equation}
The indeterminacy points are numbered in clock-wise direction
and lie on the vertices of a hexagon.
Following remark \ref{rem:conics} we observe that these base points
lie on the ellipse:
\begin{equation}
    \begin{aligned}
        \frac {{x}^{2}}{{B}^{2}}
        +\frac {{y}^{2}}{{A}^{2}} 
        +\frac {xy}{AB}
        &+\frac{1}{B^{2}} \left( \frac{C}{A}-x_0+ \right) x 
        +\frac{1}{A^{2}} \left( \frac{C}{B} -y_0\right) y
        + \left( \frac{x_{0}+y_{0}}{2} \right)^{2}
        \\
        &={\frac {1}{{h}^{2}{A}^{2}{B}^{2}}}
        +\frac{C}{2AB} \left( \frac {x0}{B} + \frac {y0}{A}\right)
        -\frac{1}{4} {\frac {{C}^{2}}{{A}^{2}{B}^{2}}}.
    \end{aligned}
    \label{eq:factell}
\end{equation}

In the same way as in the previous section we introduce the following 
set of lines:
\begin{subequations}
    \begin{align}
        \overline{B_{1}B_{2}} &= 
        x+{\frac {By}{A}}-{\frac {Ah{  x_0}+Bh{  y_0}-Ch+2}{2Ah}},
        \label{eq:factb1b2}
        \\
        \overline{B_{2}B_{3}} &=
        x-{\frac {Ah{  x_0}-Bh{  y_0}-Ch+2}{2Ah}},
        \label{eq:factb2b3}
        \\
        \overline{B_{3}B_{4}} &=
        y+{\frac {Ah x_0-Bh y_0+Ch+2}{2Bh}},
        \label{eq:factb3b4}
        \\
        \overline{B_{4}B_{5}} &=
        x+{\frac {By}{A}}-{\frac {Ah{  x_0}+Bh{  y_0}-Ch-2}{2Ah}},
        \label{eq:factb4b5}
        \\
        \overline{B_{5}B_{6}} &=
        x-{\frac {Ah{  x_0}-Bh{  y_0}-Ch-2}{2Ah}},
        \label{eq:factb5b6}
        \\
        \overline{B_{6}B_{1}} &=
        y+\frac {Ah{  x_0}-Bh{  y_0}+Ch-2}{2Bh}.
        \label{eq:factb6b1}
    \end{align}
    \label{eq:factlines}
\end{subequations}
We then build the invariant \eqref{eq:Htildefact} as:
\begin{equation}
    \tilde{H} =
    \frac{\overline{B_1B_2}\,\overline{B_3B_4}\,\overline{B_5B_6}}{%
    \overline{B_2B_3}\, \overline{B_4B_5}\, \overline{B_6B_1}}.
    \label{eq:factHtildefact}
\end{equation}
Taking the continuum limit $h\to0$ we have:
\begin{equation}
    \begin{aligned}
        \tilde{H} 
        =-1-\left( Ax_{0}+By_{0}+C \right) h &+
        \left( Ax_{0}+By_{0}+C \right)^{2}\frac{h^2}{2}+
        \\
        &+\left[2AB H\left( x,y \right) - \kappa  \right]h^{3} + O\left( h^{4} \right),
    \end{aligned}
    \label{eq:factHtildeclim}
\end{equation}
where $\kappa=\kappa\left( A,B,C,x_{0},y_{0} \right)$ is a constant.
So, also in this case the continuum first integral \eqref{eq:fact} arises
at the third order in $h$.

%Given the hexagon formed by $B_{i}$ we can construct three additional
%lines:
In addition, we have the following lines:
\begin{equation}
    \overline{B_{1}B_{4}} = x-x_{0},
    \quad
    \overline{B_{3}B_{6}} = A x + B y + C,
    \quad
    \overline{B_{2}B_{5}} = y - y_{0},
    \label{eq:factaddline}
\end{equation}
which are three factors of the original Hamiltonian \eqref{eq:fact}.
Then we can form the polynomial:
\begin{equation}
    P = 
    \overline{B_{1}B_{4}}\,\overline{B_{3}B_{6}}\,\overline{B_{2}B_{5}},
    \label{eq:factP}
\end{equation}
and prove by direct computation that it is the Darboux polynomial
with the same cofactor as the numerator and denominator
of \eqref{eq:factHtildefact}, see Remark \ref{rem:darboux}.
This implies that we can construct the two
additional following invariants:
\begin{equation}
    \tilde{H}_{1} =
    \frac{\overline{B_{1}B_{4}}\,\overline{B_{3}B_{6}}\,\overline{B_{2}B_{5}}}{%
    \overline{B_2B_3}\, \overline{B_4B_5}\, \overline{B_6B_1}}.
    %\frac{\overline{B_1B_2}\,\overline{B_3B_4}\,\overline{B_5B_6}}{%
    %\overline{B_2B_3}\, \overline{B_4B_5}\, \overline{B_6B_1}}.
    \quad
    \tilde{H}_{2} =
    \frac{\overline{B_{1}B_{4}}\,\overline{B_{3}B_{6}}\,\overline{B_{2}B_{5}}}{%
    \overline{B_1B_2}\,\overline{B_3B_4}\,\overline{B_5B_6}}.
    \label{eq:factHadd}
\end{equation}
Taking the continuum limit $h\to0$ we have:
\begin{equation}
    \tilde{H}_{1} = - \tilde{H}_{2} 
    =- A^{2}B H\left( x,y \right) h^{3} + O\left( h^{4} \right),
    \label{eq:factHaddclim}
\end{equation}
So we see that also in this case we recover the continuum first 
integral \eqref{eq:fact} through the continuum limit.

To summarise, in the most general factorisable case the three factorised
lines are preserved independently from $h$.  This explains why in the
continuous factorised system in the finite part of the plane only the
triangle defined by \eqref{eq:fact} is present.  On the other hand, the two
families of lines \eqref{eq:factlines}, alongside of the base points
\eqref{eq:factbp} are pushed to the line at infinity as $h\to0$.  See
Figure \ref{fig:fact} for a graphical representation.  Like in the case of
the HH potential, it is possible to see that the singular fibres
configuration of the pencil associated to the invariant
\eqref{eq:factHtildefact} is of type $A_{2}^{3}\oplus A_{1}$, and there is
a singular fibres of type $A_{0}$ represented by a nodal cubic, i.e. it is
the elliptic fibration number 61 from \cite[Table
8.2]{SchuttShioda2019mordell}.  So, the structure is more special than the
generic one, described in \Cref{cor:singfib}, and explains the additional
triangle-like structure observed.  To conclude, note that even though
    the most general factorisable case depends on \emph{five} parameters
after a proper choice of coordinates, can be constructed essentially in the
same way as the ``parameterless'' HH case.
 
\begin{figure}[hbt]
    \centering
    \includegraphics{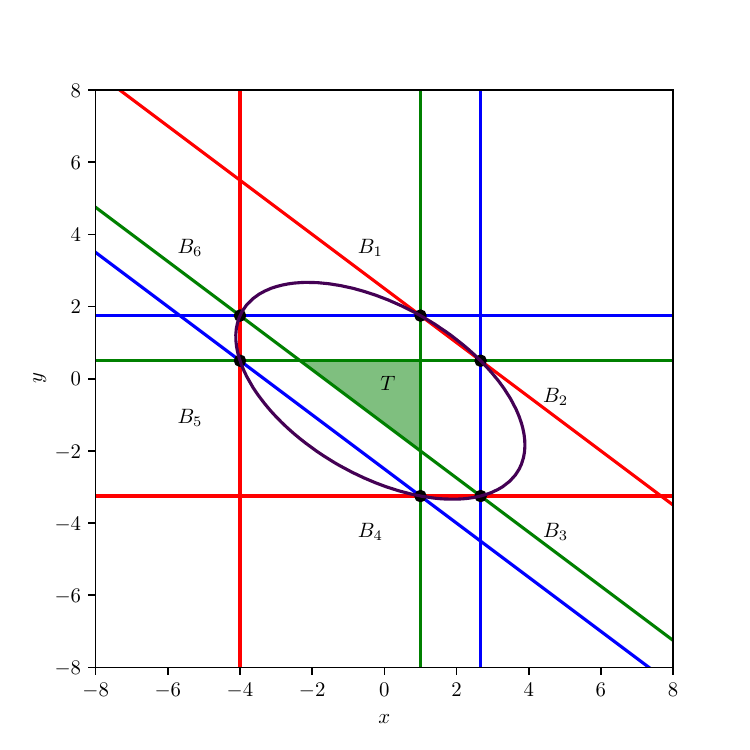}
    \caption{The general factorisable case \eqref{eq:facteqd} with $h=1/10$,
        $x_0=1$, $y_0=1/2$, $A=3$, $B=4$, and $C=5$: the lines
        $\overline{B_{1}B_{2}}$, $\overline{B_{3}B_{4}}$, and
        $\overline{B_{5}B_{6}}$ in red, the lines $\overline{B_{2}B_{3}}$,
        $\overline{B_{4}B_{5}}$, and $\overline{B_{6}B_{1}}$ in blue, and the
        $h$ independent lines $\overline{B_{1}B_{4}}$, $\overline{B_{2}B_{5}}$,
        and $\overline{B_{3}B_{6}}$ in green.  The analog of the HH triangle is
    visualised in light green, while the ellipse~\eqref{eq:factell} is drawn in
purple.}
    \label{fig:fact}
\end{figure}

\subsection{A non-factorisable example}

In the past two subsections we gave some examples of continuum Hamiltonians
factorisable in three affine polynomials. In this subsection we show what
happens in the case such factorisation is not possible. Consider the
following Hamiltonian:
\begin{equation}
    H = y\left( x^{2}-y^{2}-1  \right).
    \label{eq:nonfact}
\end{equation}
The polynomial $P=x^{2}-y^{2}-1\in\Cp\left[ x,y \right]$ is not
factorisable. So, the Hamiltonian \eqref{eq:nonfact} is made of a linear
factor and an irreducibile quadratic one.  The corresponding system of
Hamiltonian equations is:
\begin{equation}
    \dot{x}= x^2-3y^2-1,
    \quad
    \dot{y} =-2xy. 
    \label{eq:nonfacteq}
\end{equation}
In Figure \ref{fig:nonfactcont} we show the level curves
of the continuous Hamiltonian \eqref{eq:nonfact}, where it is clear
that no triple linear factorisation occurs.

\begin{figure}[hbt]
    \centering
    \includegraphics{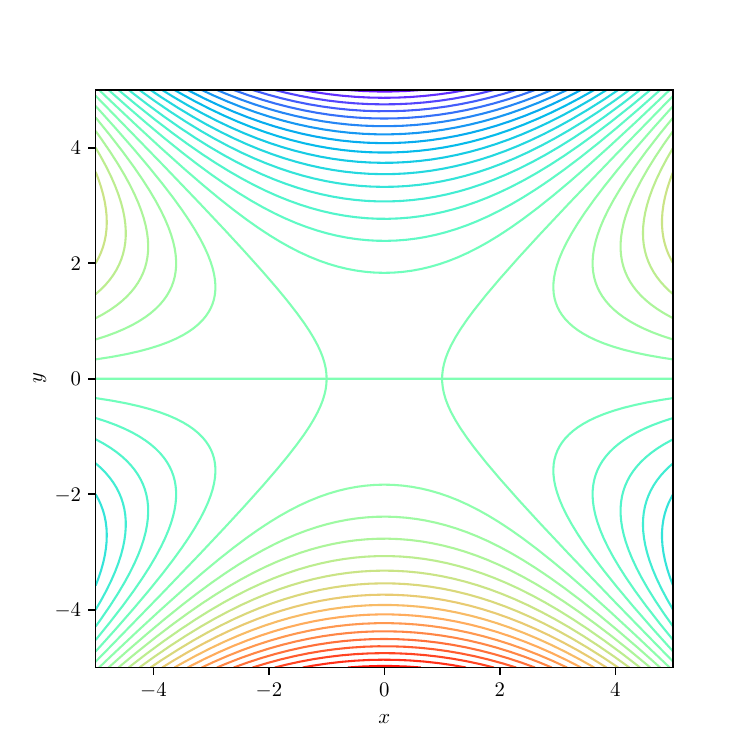}
    \caption{The level curves $H=\varepsilon$ with $H$ given 
    by equation \eqref{eq:nonfact} and 32 different values of $\varepsilon$.
    It it possible to note that there is only a linear factor (the line $y=0$) 
    and that the base points are pushed to the line at infinity in $\Pj^{2}$.}
    \label{fig:nonfactcont}
\end{figure}

Following the rule \eqref{eq:kahan} we have the following KHK discretisation
\begin{equation}
    \frac{x'-x}{h} = x x' - 3 y y'-1, 
    \quad
    \frac{y'-y}{h} = -x y'-x' y,
    \label{eq:nonfacteqd}
\end{equation}
which possesses the following indeterminacy points:
\begin{equation}
    \begin{gathered}
        B_{1} =
        \left( 
            -\frac{1}{2}\left( h+\frac{1}{h} \right),{\frac {\sqrt {\delta }}{6h}}
        \right),
        \quad
        B_{2} =
        \left( \frac{1}{2}\left( h+\frac{1}{h} \right), {\frac {\sqrt {\delta }}{6h
}}
        \right),
        \quad
        B_{3} = \left(\frac{1}{h},0 \right),
        \\
        B_{4} =
        \left( \frac{1}{2}\left( h+\frac{1}{h} \right),-{\frac {\sqrt {\delta }}{6h}}
        \right),
        \quad
        B_{5} = 
        \left( -\frac{1}{2}\left( h+\frac{1}{h} \right),-{\frac {\sqrt {\delta }}{6h}}
        \right),
        \quad
        B_{6} =\left(-\frac{1}{h},0\right),
    \end{gathered}
    \label{eq:nonfactbp}
\end{equation}
where
\begin{equation}
    \delta = 3\left( 1-h^{2} \right) \left( {h}^{2}+3 \right).
    \label{eq:deltaq}
\end{equation}
Following remark \ref{rem:conics} we observe that these base points
lie on the ellipse:
\begin{equation}
    h^2 x^2+3 h^2 y^2=1.
    \label{eq:nonfactell}
\end{equation}

\begin{remark}
    Note that $\delta>0$ if $-1 < h < 1$ which justifies taking the 
    square roots in \eqref{eq:nonfactbp}. Since we are interested in the limit
    $h\to 0^{+}$ this is no restriction. If one wishes to consider different
    values of $h$ one can consider the base points as lying on a hexagon
    on the plane in $\Pi = \R \times \imath \R \subset \Cp^{2}$.
    \label{rem:squareroot}
\end{remark}

In the same way as in the previous section we introduce the following 
set of lines:
\begin{subequations}
    \begin{align}
        \overline{B_{1}B_{2}} &= y-{\frac{\sqrt {\delta }}{6h}},
        \label{eq:nonfactb1b2}
        \\
        \overline{B_{2}B_{3}} &=
        y + \frac{1}{3} \frac{\sqrt{\delta}}{1-h^{2}} x 
        - \frac{1}{3h} \frac{\sqrt{\delta}}{1-h^{2}},
        %x+\frac{3\imath (h^2-1)}{\sqrt{\delta}}y- \frac{1}{h},
        \label{eq:nonfactb2b3}
        \\
        \overline{B_{3}B_{4}} &=
        y - \frac{1}{3} \frac{\sqrt{\delta}}{1-h^{2}} x 
        + \frac{1}{3h} \frac{\sqrt{\delta}}{1-h^{2}},
        %x-\frac{3\imath (h^2-1)}{\sqrt{\delta}}y- \frac{1}{h},
        \label{eq:nonfactb3b4}
        \\
        \overline{B_{4}B_{5}} &= y+\frac{\sqrt {\delta}}{6h},
        \label{eq:nonfactb4b5}
        \\
        \overline{B_{5}B_{6}} &=
        y + \frac{1}{3} \frac{\sqrt{\delta}}{1-h^{2}} x 
        + \frac{1}{3h} \frac{\sqrt{\delta}}{1-h^{2}},
        %x+\frac{3\imath (h^2-1)}{\sqrt{\delta}}y+ \frac{1}{h},
        \label{eq:nonfactb5b6}
        \\
        \overline{B_{6}B_{1}} &=
        y - \frac{1}{3} \frac{\sqrt{\delta}}{1-h^{2}} x 
        - \frac{1}{3h} \frac{\sqrt{\delta}}{1-h^{2}}.
        %x-\frac{3\imath (h^2-1)}{\sqrt{\delta}}y+ \frac{1}{h},
        \label{eq:nonfactb6b1}
    \end{align}%
    \label{eq:nonfactlines}
\end{subequations}
%In this case these lines are clearly complex lines lying in the plane
%$\Pi\subset\Cp^{2}$.

We then build the invariant \eqref{eq:Htildefact} as:
\begin{equation}
    \tilde{H} =
    \frac{\overline{B_1B_2}\,\overline{B_3B_4}\,\overline{B_5B_6}}{%
    \overline{B_2B_3}\, \overline{B_4B_5}\, \overline{B_6B_1}}.
    \label{eq:nonfactHtildenonfact}
\end{equation}
Taking the continuum limit $h\to0^{+}$ we have:
\begin{equation}
    \tilde{H} 
    =-1 - 4 H\left( x,y \right) h^{3} + O\left( h^{4} \right),
    \label{eq:nonfactHtildeclim}
\end{equation}
so we see that we recover the continuum first integral \eqref{eq:nonfact}.

Like in the previous cases, given the hexagon formed by $B_{i}$ we can 
construct three diagonal lines:
\begin{equation}
    \overline{B_{1}B_{4}} =
    x+\frac{3(h^2+1)}{\sqrt{\delta}}y,
    \quad
    \overline{B_{3}B_{6}} = y,
    \quad
    \overline{B_{2}B_{5}} =
    y-\frac{1}{3}\frac{\sqrt{\delta}}{h^2+1}x.
    \label{eq:nonfactaddline}
\end{equation}
Considering their product:
\begin{equation}
    P = 
    \overline{B_{1}B_{4}}\,\overline{B_{3}B_{6}}\,\overline{B_{2}B_{5}},
    \label{eq:nonfactP}
\end{equation}
we find that this polynomial is not a Darboux polynomial for the
map \eqref{eq:nonfacteqd}.
In particular we have:
\begin{equation}
    P = \left(H+y\right) + O(h),
    \label{eq:nonfactPlim}
\end{equation}
which does not reduce to the continuous Hamiltonian,
but to its factorisable part: 
$H+y=\left( x- y\right)\left( x+ y \right)$.

To prove that the only linearly factorisable singular fibres are numerator
and denominator of \eqref{eq:nonfactHtildenonfact} we consider the associated
pencil:
\begin{equation}
    p= e_{0}\overline{B_1B_2}\,\overline{B_3B_4}\,\overline{B_5B_6}+
    e_{1}\overline{B_2B_3}\, \overline{B_4B_5}\, \overline{B_6B_1}.
    \label{eq:pBi}
\end{equation}
Excluding the trivial singular fibres at $\left[ e_{0}:e_{1}
\right]=\left[ 0:1 \right]$
and $\left[ e_{0}:e_{1} \right]=\left[ 1:0 \right]$ this pencil has 
the following singular fibres:
\begin{subequations}
    \begin{align}
        p_{1,s}&= 
        y\left[ \left(1+\frac{h^{2}}{3}\right) x^2-\left(1-h^2\right) y^2-1-\frac{h^2}{3} \right]
        \label{eq:psing1}
        \\
        p_{2,s} &
        \begin{aligned}[t]
            &=\frac{\sqrt {3\delta}}{6}
            h^{3}\sqrt{3+ {h}^{2}}
            \left[ 4 {h}^{3}+\imath\sqrt{\delta}\left( 3+h^{2} \right) \right]
            \left[ 1- \left( {x}^{2}+3 {y}^{2} \right) {h}^{2} \right]
            \\
            &+\frac{{h}^{3}}{3}\left( 3+h^{2} \right)
            \left[ \imath h^{3} \sqrt{\delta}
                -\frac{\delta}{4}\left( 3+h^{2} \right)
            \right] p_{1,s}
            %\left( 
            % \left[ {x}^{2}+3 {y}^{2}-1 \right) {h}^{2}+3 {x}^{2}-3 {y}^{2}-3 \right], 
        \end{aligned}
        \label{eq:psing2}
        \\
        p_{3,s} &=
        \begin{aligned}[t]
            &\sqrt {3\delta}h^{3}
            \sqrt{3+ {h}^{2}} 
            \left[ 4 {h}^{3}- \imath \sqrt {\delta}\left( 3+h^{2} \right) \right]
            \left[1- \left( {x}^{2}+3 {y}^{2} \right) {h}^{2} \right]  
            \\
            &
            -\frac{9h^{3}}{2}\left( 3+h^{2} \right) 
            \left[ \delta\left( 3+h^{2} \right)  + 4 \imath h^{3} \sqrt{ \delta} \right]
            p_{1,s}.
            %&+3/2 {h}^{3}y \left(  \left( {x}^{2}+3 {y}^{2}-1
            % \right) {h}^{2}+3 {x}^{2}-3 {y}^{2}-3 \right)  \left( {h}^{8}+8 {h
            %}^{6}+18 {h}^{4}-4 \sqrt { \left( {h}^{2}-1 \right)  \left( {h}^{2}+
            %3 \right) ^{3}{h}^{6}}-27 \right)         
        \end{aligned}
        \label{eq:psing3}
    \end{align}%
    \label{eq:psings}%
\end{subequations}
The first singular fibre, equation \eqref{eq:psing1} is a
deformation of order $h^{2}$ of the original Hamiltonian \eqref{eq:nonfact}.
It is possible to check that the quadratic polynomial is not factorisable,
i.e. such a singular fibre is of type $A_{1}$.
In the same way, the two cubic curves in equations \eqref{eq:psing2} and
\eqref{eq:psing3} do not admit any affine factors, but rather are
nodal cubics, i.e. singular fibres of type $A_{0}$. At infinity except from 
the common $\mathcal{D}$ fibre of type $A_{1}$, see \Cref{rem:conics}, there
is no other new singular fibre.
%These two fibres are degenerate because their genus vanishes, so they are
%conics and not elliptic curves.
%\begin{subequations}
%    \begin{align}
%        p_{1,s}&= 
%        \begin{aligned}[t]
%            &-3\left( {h}^{2} +3 \right) {x}^{2}y+2 \sqrt {3}{h}^{2}{x}^{2}
%            +9  \left( h^{2}-1 \right)  {y}^{3}
%            \\
%            &-6 \sqrt {3}{h}^{2}{y}^{2}+ 3\left( {h}^{2}+3 \right) y-2 \sqrt {3}, 
%        \end{aligned}
%        \label{eq:psing1}
%        \\
%        p_{2,s} &=
%        \begin{aligned}[t]
%            &3 \left(  {h}^{2}+3\right) {x}^{2}y+2 \sqrt {3}{h}^{2}{x}^{2}
%            -9  \left( h^{2}-1 \right)  {y}^{3} 
%            \\
%            &-6 \sqrt {3}{h}^{2}{y}^{2}-3 \left( {h}^{2}+3 \right) y-2 \sqrt {3}. 
%        \end{aligned}
%        \label{eq:psing2}
%    \end{align}
%    \label{eq:psings}
%\end{subequations}
%while the pencil \eqref{eq:pBi} has genus 1.
%Moreover, we have that the genus of thes

To summarise, with this example we showed that when the continuum cubic
Hamiltonian is not factorisable the corresponding KHK discretisation
admits, in general, only two singular fibres factor in the product of three
affine polynomials. Other singular fibres, are  either union of a
line and a conic or nodal cubics. In particular this means that the
complete singular fibre configuration is of type $A_{2}^{2}\oplus
A_{1}^{2}$, i.e. number 40 from \cite[Table 8.2]{SchuttShioda2019mordell}.
These considerations underline the differences with the factorisable cases
discussed in the previous sections.  See Figure \ref{fig:nonfact} for a
graphical representation.

\begin{figure}[hbt]
    \centering
    \includegraphics{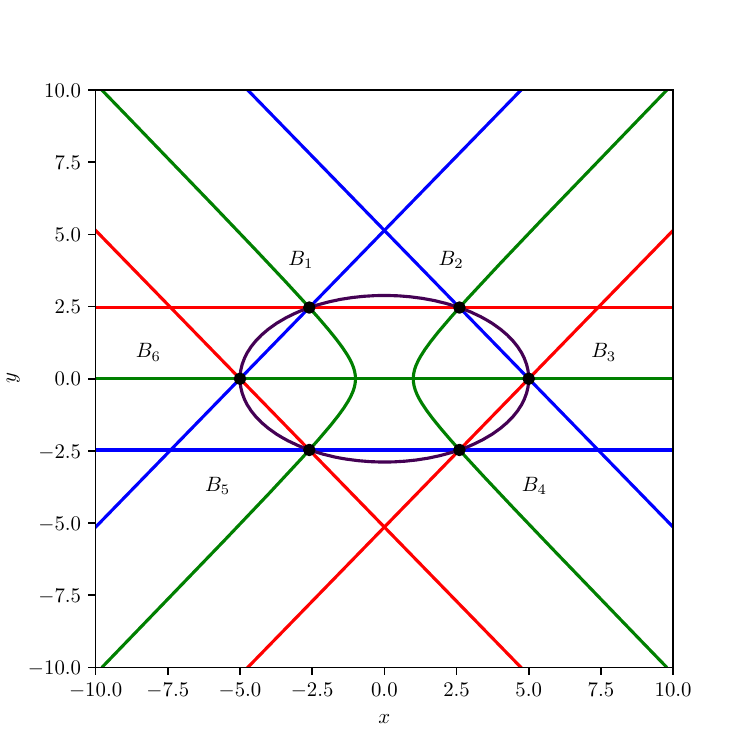}
    \caption{The non-factorisable case \eqref{eq:nonfacteqd} with 
        $h=1/5$: the lines $\overline{B_{1}B_{2}}$, $\overline{B_{3}B_{4}}$, 
        and $\overline{B_{5}B_{6}}$ in red, the lines $\overline{B_{2}B_{3}}$, 
        $\overline{B_{4}B_{5}}$, and $\overline{B_{6}B_{1}}$ in blue, and
        the singular pencil $p_{1,s}=0$ in green.
        Finally, the ellipse \eqref{eq:nonfactell} is displayed in purple.
    }
    \label{fig:nonfact}
\end{figure}

\subsection{Quadratic irreducible Hamiltonians and non-convex hexagons}

%The example we presented in the previous has a natural yet
%interesting generalisation to:
%\begin{equation}
%    H_{\alpha} = (y-\alpha)\left( x^{2}-y^{2}-1  \right).
%    \label{eq:nonfactalpha}
%\end{equation}
%For all $\alpha\in\R$ the cubic Hamiltonian is made of an quadratic
%irreducible term and a linear factor.  Depending on the value of
%$\alpha\in\R$ the base points obtained from the KHK discretisation
%of \eqref{eq:nonfactalpha} form several interesting arranVgements,
%for instance non non-convex hexagonal shapes. However, to avoid
%cumbersome formulas we restrict ourselves to the case when $\alpha=2$.

In this subsection we consider an example which shows that the base 
points can be arranged in interesting non-convex hexagonal shapes. 
The system we consider is the following:
\begin{equation}
    H_{2} = (x-2)\left( x^{2}+y^{2}-1  \right),
    \label{eq:nonfact2}
\end{equation}
which was discussed in \cite[Example 1]{CelledoniEvripidouMcLareOwewnQuispelTapleyvanderKamp2019}:
Like in the previous example the cubic Hamiltonian is made of an quadratic
irreducible term and a linear factor.
The corresponding system of Hamiltonian equations is:
\begin{equation}
    \dot{x}= 2 (x-2) y,
    \quad
    \dot{y} =- 3x^2-y^2-4 x +1.
    \label{eq:nonfacteq2}
\end{equation}
%In Figure \ref{fig:nonfactcont} we show the level curves
%of the continuous Hamiltonian \eqref{eq:nonfact}, where it clear
%that no triple linear factorisation occurs.
%
%\begin{figure}[hbt]
%    \centering
%    \includegraphics{nonfactpencil}
%    \caption{The level curves $H=\varepsilon$ with $H$ given 
%    by equation \eqref{eq:nonfact} and 32 different values of $\varepsilon$.
%    It it possible to note that there is only a linear factor (the line $y=0$) 
%    and that the base points are pushed to line at infinity in $\Pj^{2}$.}
%    \label{fig:nonfactcont}
%\end{figure}

Following the rule \eqref{eq:kahan} we have the following KHK discretisation
\begin{equation}
    \frac{x'-x}{h} = (x-2) y'+y (x'-2), 
    \quad
    \frac{y'-y}{h} = (-3 x+2) x'-y' y+2 x+1.
    \label{eq:nonfacteqd2}
\end{equation}
possessing the following indeterminacy points:
\begin{equation}
    \begin{gathered}
        B_{1} =
        \left(2,\frac{1}{h}\right),
        \quad
        B_{2} =
        \left(
        \frac {10 {h}^{3}+\sqrt {\delta_{2}}-6 h}{2h\left( 4 {h}^{2}-3 \right) },
        -\frac {2 h\sqrt {\delta_{2} }-{h}^{2}+3}{2h \left( 4 {h}^{2}-3 \right) }
        \right),
        \\
        B_{3} = 
        \left(
            \frac {10 {h}^{3}+\sqrt {\delta_{2}}-6 h}{2h \left( 4 {h}^{2}-3 \right) },
            \frac {2 h\sqrt {\delta_{2}}-{h}^{2}+3}{2h \left( 4 {h}^{2}-3 \right) }
        \right),
        \quad
        B_{4} = \left(2,-\frac{1}{h}\right),
        \\
        B_{5} = 
        \left(
            \frac {-10 {h}^{3}+\sqrt {\delta_{2}}+6 h}{2h \left( 4 {h}^{2}-3 \right) },
            \frac {2 h\sqrt {\delta_{2}}+{h}^{2}-3}{2h \left( 4 {h}^{2}-3 \right) }
        \right),
        \\
        B_{6} =
        \left(
            \frac {-10 {h}^{3}+\sqrt {\delta_{2}}+6 h}{2h \left( 4 {h}^{2}-3 \right) },
            \frac {2 h\sqrt {\delta_{2} }+{h}^{2}-3}{2h \left( 4 {h}^{2}-3 \right) }
        \right)
    \end{gathered}
    \label{eq:nonfactbp2}
\end{equation}
where
\begin{equation}
    \delta_{2} = -3  \left(1-  h^{2}\right)  \left(3- 7 {h}^{2} \right).
    \label{eq:deltaq2}
\end{equation}
Following remark \ref{rem:conics} we observe that these base points
lie on the conic:
\begin{equation}
    3 h^2 x^2-h^2 y^2-8 h^2 x+4 h^2+1=0,
    \label{eq:nonconhyp}
\end{equation}
whose real part represents an hyperbola.

\begin{remark}
    Differently from Remark \ref{rem:squareroot} $\delta_{2}$ is
    positive if only if $3/7<h^{2}<1$.
    This implies that to take the limit $h\to0$ we will go through
    a region where the base points lie in the complex space $\Cp^{2}$.
    However, since the proof of \Cref{thm:triangles} is based
    on algebraic geometry, we can still apply it.
    To draw pictures in this subsection we will assume that the
    base points lie within this range, so that they are points in the real
    plane.
    \label{rem:squareroot2}
\end{remark}

%\begin{remark}
%    Note that in this case the base points \eqref{eq:nonfactbp2} lie on
%    the following conic:
%    which is the common denominator of the maps $\boldsymbol{\Phi}$ and
%    $\boldsymbol{\Phi}^{-1}$ obtained from \eqref{eq:nonfacteqd2}.
%    This curve is an hyperbola whose zero locus becomes empty
%    as $h\to0$.
%    \label{rem:hyperbola}
%\end{remark}

In the same way as in the previous section we introduce the following 
set of lines:
\begin{subequations}
    \begin{align}
        \overline{B_{1}B_{2}} &= 
        y+
        \frac { 2 h\sqrt {\delta_{2}} +7h^{2}-3 }{6 h(1- {h}^{2})+\sqrt {\delta_{2}}} x 
        -\frac{1}{h}{\frac {4 \sqrt {\delta_{2}}{h}^{2}+8 {h}^{3}+\sqrt {\delta_{2}}}{6 h(1 -{h}^{2})+\sqrt {\delta_{2}}}},
        \label{eq:nonfactb1b22}
        \\
        \overline{B_{2}B_{3}} &=
        x-{\frac {10 {h}^{3}+\sqrt {\delta_{2}}-6 h}{2h \left( 4 {h}^{2}-3 \right) }},
        \label{eq:nonfactb2b32}
        \\
        \overline{B_{3}B_{4}} &=
        y
        -{\frac{ 2 h\sqrt {\delta_{2}}+7 {h}^{2}-3}{6h(1- {h}^{2})+\sqrt {\delta_{2}}}}x
        +\frac{1}{h}{\frac {4 \sqrt {\delta_{2}}{h}^{2}+8 {h}^{3}+\sqrt {\delta_{2}}}{6h(1 -6 {h}^{2})+\sqrt {\delta_{2}} }},
        \label{eq:nonfactb3b42}
        \\
        \overline{B_{4}B_{5}} &= 
        y
        -{\frac { 2 h\sqrt {\delta_{2}}-7 {h}^{2}+3}{6h(6 {h}^{2}-1)+\sqrt {\delta_{2}}}}x
        +\frac{1}{h}{\frac {4 \sqrt {\delta_{2}}{h}^{2}-8 {h}^{3}+\sqrt {\delta_{2}}}{ 6h(6 {h}^{2}-1)+ \sqrt {\delta_{2}}}},
        \label{eq:nonfactb4b52}
        \\
        \overline{B_{5}B_{6}} &=
        x- {\frac {10{h}^{3}-\sqrt {\delta_{2}}-6 h}{2h \left( 4 {h}^{2}-3 \right) }},
        \label{eq:nonfactb5b62}
        \\
        \overline{B_{6}B_{1}} &=
        y+
        \frac { 2 h\sqrt {\delta_{2}}-7{h}^{2}+3 }{6h( {h}^{2}-1)+\sqrt {\delta_{2}}} x
        -\frac{1}{h}
        {\frac {4 \sqrt {\delta_{2}}{h}^{2}-8 {h}^{3}+\sqrt {\delta_{2}}}{ 6 h({h}^{2}-1)+\sqrt {\delta_{2}}}}
        \label{eq:nonfactb6b12}
    \end{align}%
    \label{eq:nonfactlines2}
\end{subequations}

We then build the invariant \eqref{eq:Htildefact} as:
\begin{equation}
    \tilde{H} =
    \frac{\overline{B_1B_2}\,\overline{B_3B_4}\,\overline{B_5B_6}}{%
    \overline{B_2B_3}\, \overline{B_4B_5}\, \overline{B_6B_1}}.
    \label{eq:nonfactHtildenonfact2}
\end{equation}
Taking the continuum limit $h\to0^{+}$ we have:
\begin{equation}
    \tilde{H} 
    =-1 - 4 \imath h +8 h^2+ \frac{4}{3}\imath(3H_{2} + 10)h^{3} + O\left( h^{4} \right),
    \label{eq:nonfactHtildeclim2}
\end{equation}
so we see that we recover, up to the addition of an inessential constant,
the continuum first integral \eqref{eq:nonfact2}.
Computing the limit we used that when $h\to0$ $\delta_{2}<0$.

Like in the previous example we can consider the singular fibres of the
pencil associated to $\tilde{H}$ \eqref{eq:nonfactHtildenonfact2}. 
These singular fibres are again union of three lines,
union of a line and a conic, and nodal cubics. So, the singular
fibres configuration is again of type $A_{2}^{2}\oplus A_{1}^{2}$,
i.e. number 40 from \cite[Table 8.2]{SchuttShioda2019mordell}. In this case we do
not present the explicit expression of these curves since it
is rather cumbersome and it does not add any further information.

To summarise, this example adds to the previous one the fact that there
exist cases when the ``hexagon'' formed by the base points is a non-convex
polygon, and that the base points can become complex in a neighbourhood
of zero. In Figure \ref{fig:nonfact2} we give a graphical representation
of this occurrence.

\begin{figure}[hbt]
    \centering
    \includegraphics{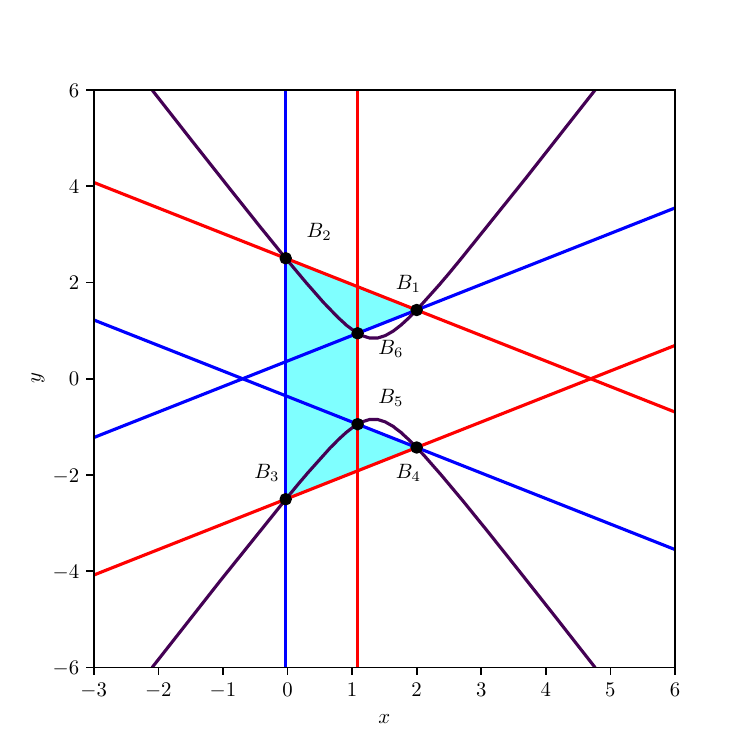}
    \caption{The non-factorisable case \eqref{eq:nonfact2} with 
        $h=7/10$: the lines $\overline{B_{1}B_{2}}$, $\overline{B_{3}B_{4}}$, 
        and $\overline{B_{5}B_{6}}$ in red, the lines $\overline{B_{2}B_{3}}$, 
        $\overline{B_{4}B_{5}}$, and $\overline{B_{6}B_{1}}$ in blue.
        The hyperbola \eqref{eq:nonconhyp} is displayed in purple, while the
non-convex hexagon formed by the base points is highlighted in cyan.}
    \label{fig:nonfact2}
\end{figure}

\subsection{The degenerate case: the conic curve}

The results of this section do not follow from the general results presented
in Theorem \ref{thm:triangles}, but rather form an extension for another
case. As it will be more clear later this might be a bridge for further
developments of the results presented in this paper to other cases of
interest.

In this example we consider the case when the Hamiltonian is the most general 
quadratic Hamiltonian:
\begin{equation}
    H = \frac{1}{2} a x^{2} + b xy+ \frac{1}{2} c y^{2} + d x + e y,
    \label{eq:Hq}
\end{equation}
where an additional constant term was omitted because it is inessential
for the equation of motion.
If the skew-symmetric matrix $J$ is constant, then the associated Hamiltonian
system is \emph{linear}.
However, we can impose that the associated Hamiltonian system is quadratic
by considering a system of the form:
\begin{equation}
    \vec{\dot{x}} = G(x,y) J \grad H(x,y).
    \label{eq:gaugeham}
\end{equation}
with an affine gauge function $G$.
In a similar way as it was discussed in \cite{GJ_biquadratic}, up to
canonical transformations we can always put $G=x$.
So, the associated Hamilton equations are:
\begin{equation}
    \dot{x} = x \left( b x + c y + e \right),
    \quad
    \dot{y} = - x \left( a x + b y + d \right).
    \label{eq:hequadratic}
\end{equation}

Following rule \eqref{eq:kahan} we construct the following discretisation:
\begin{subequations}
    \begin{align}
        \frac{x'-x}{h} &= \left(b x+ \frac{c}{2} y+\frac{e}{2}\right) x' 
        + \frac{c}{2} x y'+ \frac{e}{2} x, 
        \label{eq:hequadratickhka}
        \\
        \frac{y'-y}{h} &= - \left( a x+ \frac{b}{2} y+ \frac{d}{2}\right) x' 
        - \frac{b}{2} x y'- \frac{d}{2} x.
        \label{eq:hequadratickhkb}
    \end{align}
    \label{eq:hequadratickhk}%
\end{subequations}
An invariant can be constructed following \cite{CelledoniMcLachlanMcLarenOwrenQuispel2017}:
\begin{equation}
    \tilde{H} = \dfrac{H+\Delta_2 h^2 x^2/8}{1+\Delta_1 h^2 x^2/4},
    \quad
    \Delta_{1} = ac - b^{2},\,
    \Delta_{2} = -ae^2+2bde-cd^2,
    \label{eq:Htildeq}
\end{equation}
since Theorem \ref{thm:invariant} does not apply.
The associated pencil is:
\begin{equation}
    \begin{aligned}
        p(x,y;e_{0}:e_{1}) &= e_{0} \left( \frac{a}{2}  x^{2} + b xy + \frac{c}{2} y^{2} 
        + d x + e y + \frac{\Delta_2}{8} h^2 x^2\right) 
        \\
        &+ e_{1}\left( 1+\frac{\Delta_1}{4} h^2 x^2\right),
    \end{aligned}
    \label{eq:Hqpencil}
\end{equation}
and it has vanishing genus. That is, the curve $p=0$ is a conic
like the level surfaces of $H$.
From Definition \ref{def:singular} the singular points of the
pencil \eqref{eq:Hqpencil} are obtained for:
\begin{equation}
    \left[ e_{0}': e_{1}' \right]
    =
    \left[ 2 c h^{2} : e^{2}h^{2} -4 \right],
    \quad
    \left[ e_{0}'': e_{1}'' \right]
    =
    \left[ 2\Delta_{1}:-\Delta_{2} \right].
    \label{eq:Hqsing}
\end{equation}
In both cases the pencil factorises as follows:
\begin{subequations}
    \begin{align}
        p(x,y;e_{0}':e_{1}') &
        \begin{aligned}[t]
            =
            c \Delta_{1}
            &\ell\left(x,y;-\frac{b e h-c d h+2 b}{2c}, -\frac{e h+2}{c h}\right)
            \\
            \cdot &\ell\left(x,y; \frac{b e h-c d h-2 b}{2c}, -\frac{e h-2}{c h}\right),
        \end{aligned}
        \label{eq:pquadrsing1}
        \\
        p(x,y;e_{0}'':e_{1}'') &
        \begin{aligned}[t]
            =
        c\Delta_{1} 
        &\ell \left( x,y;\frac{-b+\sqrt{-\Delta_{1}}}{c},\frac{cd-\left(b-\sqrt{-\Delta_{1}}\right)e}{c\sqrt{-\Delta_{1}}} \right)
        \\
        \cdot &\ell \left( x,y;-\frac{b+\sqrt{-\Delta_{1}}}{c},\frac{cd-\left(b+\sqrt{-\Delta_{1}}\right)e}{c\sqrt{-\Delta_{1}}} \right).
        \end{aligned}
        \label{eq:pquadrsing2}
    \end{align}
    \label{eq:pquadrsing}
\end{subequations}

On the other hand the indeterminacy points of the map \eqref{eq:hequadratickhk} 
are:
\begin{subequations}
    \begin{align}
        B_{1} &=
        \left( -\frac{2}{h\sqrt{-\Delta_{1}}}, 
        \frac{2b - h\left( be-cd \right)}{ch\sqrt{-\Delta_{1}}}-\frac{eh-2}{c h} \right),
        \label{eq:B1conic}
        \\
        B_{2} &=
        \left( \frac{2}{h\sqrt{-\Delta_{1}}}, 
            -\frac{2b + h\left( be-cd \right)}{ch\sqrt{-\Delta_{1}}}- \frac{eh+2}{c h} \right),
        \label{eq:B2conic}
        \\
        B_{3} &=
        \left(-\frac{2}{h\sqrt{-\Delta_{1}}}, 
         \frac{2b + h\left( be-cd \right)}{ch\sqrt{-\Delta_{1}}}-\frac{eh+2}{c h}  \right),
        \label{eq:B3conic}
        \\
        B_{4} &=
        \left( \frac{2}{h\sqrt{-\Delta_{1}}}, 
        -\frac{2b - h\left( be-cd \right)}{ch\sqrt{-\Delta_{1}}}-\frac{eh-2}{c h} \right).
        \label{eq:B4conic}
    \end{align}
\end{subequations}
This is consistent with the general theory of conic pencils.
Considering the lines:
\begin{subequations}
    \begin{align}
        \overline{B_{1}B_{2}} &=
        \ell \left( x,y;-\frac{b+\sqrt{-\Delta_{1}}}{c},
            \frac{cd-\left(b+\sqrt{-\Delta_{1}}\right)e}{c\sqrt{-\Delta_{1}}} \right),
        \label{eq:B1B2conic}
        \\
        \overline{B_{2}B_{3}} &=
        \ell\left(x,y;-\frac{b e h-c d h+2 b}{2c}, -\frac{e h+2}{c h}\right),
        \label{eq:B2B3conic}
        \\
        \overline{B_{3}B_{4}} &=
        \ell \left( x,y;\frac{-b+\sqrt{-\Delta_{1}}}{c},\frac{cd-\left(b-\sqrt{-\Delta_{1}}\right)e}{c\sqrt{-\Delta_{1}}} \right),
        \label{eq:B3B4conic}
        \\
        \overline{B_{4}B_{1}} &=
        \ell\left(x,y; \frac{b e h-c d h-2 b}{2c}, -\frac{e h-2}{c h}\right),
        \label{eq:B4B1conic}
    \end{align}
    \label{eq:coniclines}
\end{subequations}
we can write down the invariant \eqref{eq:hequadratic} as:
\begin{equation}
    \widehat{H} = 
    \frac{\overline{B_1B_2}\, \overline{B_3B_4}}{\overline{B_2B_3}\, \overline{B_4B_1}}.
    \label{eq:Hhatconic}
\end{equation}
That is, in this case the invariant is \emph{expressible as the ratio of
four lines}. In this case the lines are not necessarily pairwise parallel,
as is evident from the expression of the angular coefficients of
the lines in \eqref{eq:coniclines}, as shown in Figure \ref{fig:conics}. 
Moreover, note that the invariant \eqref{eq:Hhatconic} is a multiple of 
\eqref{eq:Htildeq}:
\begin{equation}
    \widehat{H} = \frac{h^{2}c^{2}}{\Delta_{1}} \tilde{H}.
    \label{eq:Hconicprop}
\end{equation}
This implies that the continuum limit of
$\widehat{H}$ is the Hamiltonian $H$ \eqref{eq:Hq} at order $h^{2}$.

\begin{figure}[htb]
    \centering
    \includegraphics{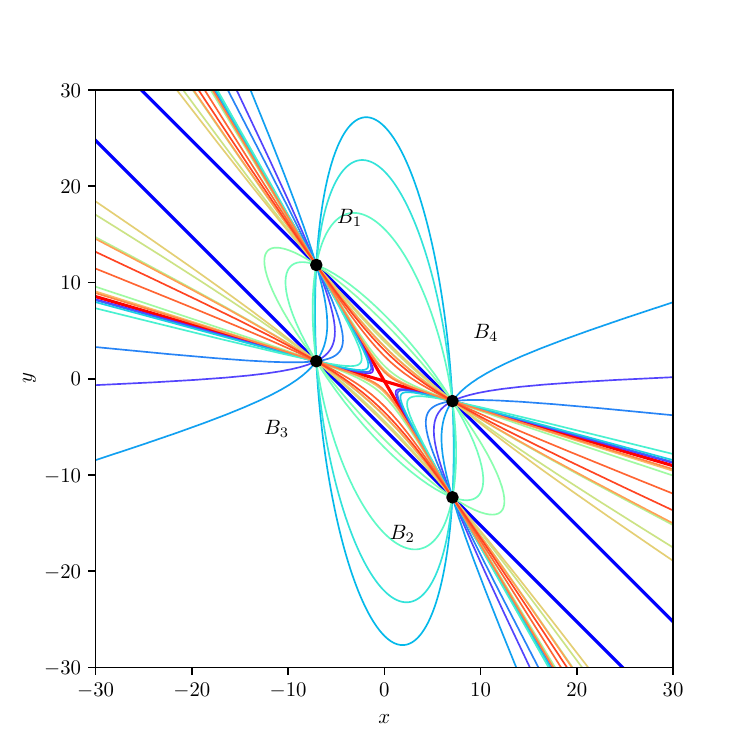}
    \caption{The pencil of conics \eqref{eq:Hqpencil} with 
        parameters $a=1$, $b =c = 2$, $d = e = 1/2$, and $h=1/5$. 
        In total 24 different combinations of $\left[ e_{0}:e_{1} \right]\in\Pj^{1}$
        are considered.
        The lines $\overline{B_{1}B_{2}}$, $\overline{B_{3}B_{4}}$ are in red,
        while the lines $\overline{B_{2}B_{3}}$, $\overline{B_{4}B_{1}}$ are in blue.}
    \label{fig:conics}
\end{figure}

\section{Conclusions}
\label{sec:conclusions}

In this paper we have shown how to construct in an elementary way the
invariant of the KHK discretisation of a two-dimensional Hamiltonian
system. This construction was possible because of the particular
structure of the KHK birational map, as highlighted in \cite{PSS2019},
and the concept of singular fibre of a pencil of curves. Our main
result, \Cref{thm:triangles}, tells us that such an invariant can
be written down as the product of the ratios of affine polynomials
defining the prolongation of the three parallel sides of a hexagon.
From this result, in \Cref{cor:singfib}, we identified the singular
fibre configuration of the generic KHK discretisation of a Hamiltonian
cubic system to be of type $A_{2}^{2}\oplus A_{1}$, i.e. number 20
from \cite[Table 8.2]{SchuttShioda2019mordell}.  Then, we noticed
that \Cref{thm:triangles} enables us to construct the invariant of a
KHK discretisation of a Hamiltonian cubic system simply by looking
at its indeterminacy points.  We presented several examples of this
construction. In particular, in those examples we observed that in some
cases the configuration of singular fibres is bigger than in the general
cases, presenting examples with singular fibres configuration of type
$A_{2}^{3}\oplus A_{1}$ (number 61) and $A_{2}^{2}\oplus A_{1}^{2}$
(number 40).  In the first case, the additional triple of lines allowed
the construction of multiple representations of the invariant in terms of
ratios of linearly factorised cubic polynomials.  Finally, we showed
an example of conic curves which is outside the hypotheses of Theorem
\ref{thm:triangles}, but where a similar final result is obtained.

The conic example is built using the ideas of
\cite{CelledoniMcLachlanMcLarenOwrenQuispel2017}, although the conic
case was not considered there.  That example belong to the class
of the discrete Nahm systems which are some of the most studied KHK
discretisation since their appearance in \cite{PetreraPfadlerSuris2009},
see for instance \cite{CelledoniMcLachlanMcLarenOwrenQuispel2017,
PetreraZander2017, GJ_biquadratic, GubNahm, Zander2021, GubShiNahm}
and their interpretation in terms of generalised Manin transform
in \cite{VanDerKampCelledoniMcLachlanMcLarenOwrenQuispel2019,
Petreraetal2021}.

We hope that our result will be useful in shedding light on why
integrability is preserved or not preserved by the KHK discretisation, and
we hope that it will be possible to extend our result to other known
integrable systems, both in the plane or in higher dimensions.
Regarding the last topic, we observe that recently some construction
of integrable systems in three dimensions where singular fibres play a
fundamental r\^ole appeared in the literature
\cite{GG_cremona3,Alonso_et_al2022}.

\section*{Acknowledgments}

This work was done within the framework of the Project ``Meccanica dei Sistemi
discreti'' of the GNFM unit of INDAM.\  In particular, GG acknowledges support
of the GNFM through Progetto Giovani GNFM 2023: ``Strutture variazionali e
applicazioni delle equazioni alle differenze
ordinarie''~(CUP\_E53C22001930001).

The figures in this paper are \texttt{eps} produced in \texttt{python}
using the libraries \texttt{numpy} \cite{Harris2020NumPy} and
\texttt{mathplotlib} \cite{Hunter2007}.

\appendix

\section{Explicit form of the coefficients in eq.\eqref{eq:CDexpl}}
\label{app:explicit}

Here are the formulas referred to in the statement of \Cref{lem:khkform}:
\begin{subequations}
    \begin{align}
        d_1 &= b_1 \mu_1 \mu_2-b_2 \mu_2 \mu_3+b_3 \mu_1 \mu_3-b_4 \mu_1 \mu_2+b_5 \mu_2 \mu_3-b_6 \mu_1 \mu_3, 
        \\
        d_2 &
        \begin{aligned}[t]
            &= -b_1 \mu_1-b_1 \mu_2+b_2 \mu_2+b_2 \mu_3-b_3 \mu_1-b_3 \mu_3+b_4 \mu_1
            \\
            &+b_4 \mu_2-b_5 \mu_2-b_5 \mu_3+b_6 \mu_1+b_6 \mu_3, 
        \end{aligned}
        \\
        d_3 &= b_1-b_2+b_3-b_4+b_5-b_6, 
        \\
        d_4 &= b_1 b_3 \mu_1+b_1 b_5 \mu_2-b_2 b_4 \mu_2-b_2 b_6 \mu_3+b_3 b_5 \mu_3-b_4 b_6 \mu_1, 
        \\
        d_5 &= -b_1 b_3-b_1 b_5+b_2 b_4+b_2 b_6-b_3 b_5+b_4 b_6,
        \\
        c_5 &= \frac{
            \left(
            \begin{gathered}
                b_1 b_2 b_3 b_5 \mu_2 \mu_3-b_1 b_2 b_4 b_6 \mu_1 \mu_2+b_1 b_3 b_4 b_5 \mu_1 \mu_2
                \\
                +b_1 b_3 b_5 b_6 \mu_1 \mu_3-b_2 b_3 b_4 b_6 \mu_1 \mu_3-b_2 b_4 b_5 b_6 \mu_2 \mu_3
            \end{gathered}
        \right)
    }{\Delta}, 
        \\
        c_6 &= -\frac{
            \left(
            \begin{gathered}
                b_1 b_2 b_3 b_5 \mu_2+b_1 b_2 b_3 b_5 \mu_3-b_1 b_2 b_4 b_6 \mu_1
                -b_1 b_2 b_4 b_6 \mu_2
                \\
                +b_1 b_3 b_4 b_5 \mu_1+b_1 b_3 b_4 b_5 \mu_2
                +b_1 b_3 b_5 b_6 \mu_1+b_1 b_3 b_5 b_6 \mu_3
                \\
                -b_2 b_3 b_4 b_6 \mu_1
                -b_2 b_3 b_4 b_6 \mu_3-b_2 b_4 b_5 b_6 \mu_2-b_2 b_4 b_5 b_6 \mu_3
            \end{gathered}
        \right)
        }{\Delta},
        \\
        c_7 &= \frac{
            \left(
            \begin{gathered}
                b_1 b_2 b_3 b_5-b_1 b_2 b_4 b_6+b_1 b_3 b_4 b_5
                \\
                +b_1 b_3 b_5 b_6-b_2 b_3 b_4 b_6-b_2 b_4 b_5 b_6
        \end{gathered}\right)
    }{\Delta}, 
        \\
        c_8 &= \frac{
            \left(
            \begin{gathered}
                b_1 b_2 b_3 b_4 b_5 \mu_2-b_1 b_2 b_3 b_4 b_6 \mu_1+b_1 b_2 b_3 b_5 b_6 \mu_3
                \\
                -b_1 b_2 b_4 b_5 b_6 \mu_2+b_1 b_3 b_4 b_5 b_6 \mu_1-b_2 b_3 b_4 b_5 b_6 \mu_3
        \end{gathered}\right)
    }{\Delta}, 
        \\
        c_9 &= -\frac{
            \left(
            \begin{gathered}
                b_1 b_2 b_3 b_4 b_5-b_1 b_2 b_3 b_4 b_6+b_1 b_2 b_3 b_5 b_6
                \\
                -b_1 b_2 b_4 b_5 b_6+b_1 b_3 b_4 b_5 b_6-b_2 b_3 b_4 b_5 b_6
        \end{gathered}\right)
    }{\Delta},
    \end{align}
    \label{eq:params}
\end{subequations}
and
\begin{equation}
    \Delta = b_2 b_4 b_6-b_1 b_3 b_5.
    \label{eq:Deltadef}
\end{equation}

\section{Explicit form of coefficients in equation \eqref{eq:h3eq}}
\label{app:map}

Here are the formulas referred to in \Cref{rem:darboux}:
\begin{subequations}
    \begin{align}
        a_1&= \frac{
        b_{25} b_{36}\mu_{12}^2 \mu_3^3
        -b_{14}b_{36}\mu_{23}^2 \mu_1^3
        + b_{14} b_{25}\mu_{13}^2 \mu_2^3
        }{hD}, 
        \label{eq:a1nmu}
        \\
        a_2&= \frac{
        -  b_{25} b_{36}\mu_{12}^2 \mu_3^2
        +b_{14}b_{36}\mu_{23}^2 \mu_1^2
        - b_{14} b_{25}\mu_{13}^2 \mu_2^2
        }{hD},
        \label{eq:a2nmu}
        \\
        a_3&= \frac{
        b_{25} b_{36}\mu_{12}^2 \mu_3
        -b_{14}b_{36}\mu_{23}^2 \mu_1
        + b_{14} b_{25}\mu_{13}^2 \mu_2
        }{hD},
        \label{eq:a3nmu}
        \\
        a_4&= \frac{
         - b_{25} b_{36}\mu_{12}^2
        +b_{14}b_{36}\mu_{23}^2 
        - b_{14} b_{25}\mu_{13}^2 
        }{hD},
        \label{eq:a4nmu}
        \\
        a_5&= \frac{
            \left\{
            \begin{gathered}
                (b_1+b_4) b_{25} b_{36} \mu_3^2 \mu_{12}^2
                - (b_2+b_5) b_{14} b_{36} \mu_1^2 \mu_{23}^2
                \\
                + (b_3+b_6) b_{14} b_{25} \mu_2^2 \mu_{13}^2
            \end{gathered}
        \right\}
        }{2hD},
        \label{eq:a5nmu}
        \\
        a_6&= \frac{
            \left\{
            \begin{gathered}
        - (b_1+b_4) b_{25} b_{36} \mu_3 \mu_{12}^2
        + (b_2+b_5) b_{14} b_{36} \mu_1 \mu_{23}^2
        \\
        - (b_3+b_6) b_{14} b_{25} \mu_2 \mu_{13}^2
\end{gathered}\right\}
        }{2hD},
        \label{eq:a6nmu}
        \\
        a_7&= \frac{
        (b_1+b_4) b_{25} b_{36}  \mu_{12}^2
        - (b_2+b_5) b_{14} b_{36}  \mu_{23}^2
        +(b_3+b_6) b_{14} b_{25}  \mu_{13}^2 }{2hD},
        \label{eq:a7nmu}
        \\
        a_8&= \frac{ 
        b_1 b_4 b_{25} b_{36} \mu_3 \mu_{12}^2
        - b_2 b_5 b_{14} b_{36} \mu_1 \mu_{23}^2
        + b_3 b_6 b_{14} b_{25}  \mu_2 \mu_{13}^2
        }{hD},
        \label{eq:a8nmu}
        \\
        a_9&= \frac{
        - b_1 b_4 b_{25} b_{36}  \mu_{12}^2
        + b_2 b_5 b_{14} b_{36}  \mu_{23}^2
        - b_3 b_6 b_{14} b_{25}  \mu_{13}^2
        }{hD},
        \label{eq:a9nmu}
    \end{align}
    \label{eq:abmu}
\end{subequations}
where
\begin{equation}
    D=\frac{1}{2} b_{14}b_{25}b_{36}\mu_{12}\mu_{13}\mu_{23}
    \label{eq:Ddef}
\end{equation}
and $b_{ij}=b_i-b_j$, $\mu_{ij}=\mu_i - \mu_j$. These formulas, with
$h=1$, were first presented in Appendix B in \cite{PSS2019}. We note that
since $a_{i}=O\left( 1 \right)$, the coefficients $b_{i}$ and $\mu_{i}$
depend on $h$.

\section{Explicit form of the polynomials in equation \eqref{eq:pprime}}
\label{app:PQ}

Here are the polynomials forming the cofactors of the Darboux 
polynomial in \Cref{rem:darboux}:
\begin{subequations}
    \begin{align}
        P_{1} &
        \begin{aligned}[t]
            &=
            \left[  
                \left(b_4\mu_2-b_3\mu_3\right)\mu_{12}  +\left(b_5 \mu_2 -b_6\mu_1\right)\mu_{23}  
            \right]x
            \\
            &+\left[  
                \left( b_3 - b_4\right)\mu_{12} - \left( b_5-  b_6\right) \mu_{23}
            \right]y
            +b_4 b_6 \mu_{12} - b_3 b_6\mu_{13} +b_3 b_5\mu_{23} ,
        \end{aligned}
        \label{eq:P1exp}
        \\
        P_{2} &
        \begin{aligned}[t]
            &=
            \left[  
                \left(b_1\mu_1 - b_2\mu_3 \right)\mu_{23} +\left(b_3\mu_3  - b_4\mu_2 \right) \mu_{13}
            \right]x
            \\
            &-
            \left[ 
                \left( b_1- b_2\right)\mu_{23} +\left( b_3- b_4\right)\mu_{13}
            \right]y
            -  b_1 b_4\mu_{12}
            +b_1 b_3\mu_{13} 
            - b_2 b_4\mu_{23},
        \end{aligned}
        \label{eq:P2exp}
        \\
        P_{3} &
        \begin{aligned}[t]
            &=
            \left[  
                \left(b_1\mu_1  -b_2\mu_3  \right) \mu_{12} +\left(b_5\mu_2 -b_6\mu_1\right) \mu_{13} 
            \right]x
            \\
            &-
            \left[ 
                \left(b_1- b_2\right)\mu_{12}+\left( b_5- b_6\right)\mu_{13}
            \right]y
            +b_1 b_5\mu_{12}-b_2 b_6\mu_{13}+b_2 b_5\mu_{23},
        \end{aligned}
        \label{eq:P3exp}
        \\
        Q &
        \begin{aligned}[t]
            &=
            \left(b_{14} \mu_1 \mu_2+b_{36} \mu_1 \mu_3-b_{25} \mu_2 \mu_3\right) x^2
            \\
            &-\left[b_{14} \left(\mu_1+ \mu_2\right) 
                -b_{25} \left(\mu_2+\mu_3\right)
                +b_{36} \left(\mu_1+\mu_3\right)\right] x y
            \\
            &+(b_{12}+b_{34}+b_{56}) y^2
            \\
            &+\left[\left(b_1 b_3 -b_4 b_6\right) \mu_1
            +\left(b_1 b_5 -b_2 b_4\right) \mu_2
            -\left(b_2 b_6 -b_3 b_5\right) \mu_3\right] x
            \\
            &-(b_1 b_3+b_1 b_5-b_2 b_4-b_2 b_6+b_3 b_5-b_4 b_6) y
            +b_1 b_3 b_5-b_2 b_4 b_6.
        \end{aligned}
    \end{align}
    \label{eq:PQ}
\end{subequations}

\printbibliography

\label{lastpage}
\end{document}